\newtheorem{theorem}{Theorem}
\newtheorem{definition}[theorem]{Definition}
\newtheorem{proposition}[theorem]{Proposition}
\newtheorem{remark}[theorem]{Remark}
\newtheorem{corollary}[theorem]{Corollary}
\newtheorem{lemma}[theorem]{Lemma}
\newcommand{\Hom}{\operatorname{\mathbb{H}om}}
\newcommand{\N}{\mathbb{N}}
\newcommand{\R}{R^\omega}
\begin{document}

\title{Groups associated to II$_1$-factors}

\author{Nathanial P. Brown}
\address{Penn State University, USA}
\email{nbrown@math.psu.edu}

\author{Valerio Capraro}
\address{University of Neuchatel, Switzerland}
\thanks{N.B. was supported by NSF-0856197, V.C. by Swiss SNF Sinergia project CRSI22-130435.}
\email{valerio.capraro@unine.ch}

\date{}

\maketitle

\begin{abstract} We extend recent work of the first named author, constructing a natural $\Hom$ semigroup associated to any pair of II$_1$-factors.  This semigroup always satisfies cancelation, hence embeds into its Grothendieck group. When the target is an ultraproduct of a McDuff factor (e.g., $R^\omega$), this Grothendieck group turns out to carry a natural vector space structure; in fact, it is a Banach space with natural actions of outer automorphism groups. 
\end{abstract}

\tableofcontents

\section{Introduction and main results}

Let $\omega\in\beta(\mathbb N)\setminus\mathbb N$ be a free
ultrafilter on the natural numbers and $R^\omega$ be the
corresponding ultrapower of the hyperfinite $II_1$-factor $R$.  For a separable factor $N$ the space of unital embeddings into $R^\omega$ modulo inner automorphisms, denoted $\Hom(N, \R)$, has a surprisingly rich structure. (When it is nonempty, as Connes' famous embedding problem asks \cite{Co}.) For example, in \cite{Br} it was shown to be a complete metric space with ``convex-like" structure, meaning that one could define convex combinations even though $\Hom(N, \R)$ isn't defined as a subset of a vector space.\footnote{For the original axioms of a convex-like structure we refer the reader to \cite[Definition 2.1]{Br}. These axioms have been simplified in Corollary 12 in \cite{Ca-Fr}.}  During a lecture in Nottingham the first author posed the problem of constructing a vector space embedding and two suggestions were made.  Aaron Tikuisis proposed a universal vector space construction that could be used on any abstract convex-like space.  The second author and Tobias Fritz independently had a similar idea,  showing in \cite{Ca-Fr} that everything works and, even better, one can realize any convex-like space as a closed convex set in a Banach space.  

The second suggestion in Nottingham was made by Ilijas Farah who proposed using the fundamental group of $\R$ and a Grothendieck construction to produce a vector-space embedding.  This is the path we follow here.  It is quite instructive to reduce this idea to its essence    and start in full generality.  Adding structure to the algebras leads to additional structure on the $\Hom$ spaces and only in the case that the target is an ultraproduct of a McDuff factor can we prove that one gets a vector space (even a Banach space).  Indeed, it turns out that Farah's very natural and beautiful idea is surprisingly subtle to prove, depends (as far as we can tell) in a crucial way on the special structure of ultraproducts of McDuff factors and ought not be expected to hold in the absence of similar structures.  

In more detail, let $N$ and $M$ be II$_1$-factors, $H$ be a separable, infinite-dimensional Hilbert space and $B(H)$ denote the bounded linear operators on $H$.

\begin{definition} We let $M^{\infty} \subset B(H)\bar{\otimes} M$ be the compact ideal (i.e., the algebraic ideal generated by projections of finite trace) and $\Hom(N, M^{\infty})$ be the collection of $*$-homomorphisms $\pi \colon N \to M^{\infty}$ modulo inner automorphisms of $B(H)\bar{\otimes} M$, i.e., $[\pi_1] = [\pi_2] \Longleftrightarrow \exists$ unitary $u \in B(H)\bar{\otimes} M$ such that $\pi_1 = \mathrm{Ad}u \circ \pi_2$. 
\end{definition} 
 
$\Hom(N, M^{\infty})$ carries a natural ``topology of point-wise convergence" where $[\pi_n] \to [\pi]$ means there exist representatives $\tilde{\pi}_n \sim \pi_n$ such that $\tilde{\pi}_n (x) \to \pi(x)$ in the $\sigma$-weak topology, for all $x \in N$.  Just as with K-theory or (using the Busby picture of) $\mathrm{Ext}$-theory for C$^*$-algebras, one defines a natural addition on $\Hom(N, M^{\infty})$ and we thus get a topological semigroup, where the zero homomorphism plays the role of the neutral element. Predictably, the outer automorphism groups of $N$ and $B(H) \bar{\otimes} M$ act continuously by pre- and post-composition, respectively, yielding  topological dynamical systems. Less obvious is the fact that $\Hom(N, M^{\infty})$ always satisfies cancellation, hence embeds into its Grothendieck group.  

\begin{definition}  Let $\mathcal{G} (N,M)$ denote the Grothendieck group of $\Hom(N, M^{\infty})$, equipped with the canonical actions of $\mathrm{Out}(N)$ and $\mathrm{Out}(M \bar{\otimes} B(H))$. 
\end{definition} 

Section 2 is devoted to proving the assertions above.  In section 3 we turn to fundamental groups.  That is, since elements of the fundamental group $\mathcal{F}(M)$ correspond to trace-scaling automorphisms of $B(H) \bar{\otimes} M$, one can ask whether $\Hom(N, M^{\infty})$ carries an action of this important invariant.  Examples of Popa and Vaes show it doesn't (at least canonically) in general, since there need not be a group homomorphism $\mathcal{F}(M) \hookrightarrow \mathrm{Out}(B(H) \bar{\otimes} M)$ (cf.\ \cite{PV}).  However, if $N$ is separable and $M$ is the ultraproduct of a McDuff factor, we will construct a particularly nice action of $\mathbb{R}_+$ on  $\Hom(N, M^{\infty})$.    

When $\mathcal{F}(M) = \mathbb{R}_+$ and there is a group homomorphism $\delta\colon \mathbb{R}_+ \to \mathrm{Out}(B(H) \bar{\otimes} M)$, one is tempted to extend it to an action of $\mathbb{R}$ on $\mathcal{G} (N,M)$ that produces a vector space structure.  Unfortunately, there is no reason to expect that for $s,t \in \mathbb{R}_+$ and $[\pi] \in \Hom(N, M^{\infty})$ we should have $$(s + t) [\pi] = s[\pi] + t[\pi].$$  Indeed, we rather doubt such distributivity  holds in general.  However, we observe that in the case $N$ is separable and $M$ is an ultraproduct of a McDuff factor, we do have $(s + t) [\pi] = s[\pi] + t[\pi]$ and this turns $\mathcal{G} (N,M)$ into a vector space. (One part of the proof, surely known to algebraists but included for the reader's convenience, is relegated to an appendix.) 

The main results of this paper are summarized as follows.  

\begin{theorem} For arbitrary II$_1$-factors $N$ and $M$, $\mathcal{G} (N,M)$ is a topological group with canonical actions of $\mathrm{Out}(N)$ and $\mathrm{Out}(B(H) \bar{\otimes} M)$.  

If $N$ is separable and $M = X^\omega$ for some McDuff factor $X$, then $\mathcal{F}(M) = \mathbb{R}_+$ acts on $\mathcal{G} (N,M)$ (via a homomorphism $\delta \colon \mathcal{F}(M) \to \mathrm{Out}(B(H) \bar{\otimes} M)$)   and extends to all of $\mathbb{R}$ yielding a vector space structure.  In fact, following \cite{Ca-Fr}, the topology on $\Hom(N, M)$ can be realized by a norm on $\mathcal{G} (N,M)$ yielding a Banach space. 
\end{theorem}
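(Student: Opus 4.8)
The first assertion is essentially formal. Section~2 establishes that, for arbitrary II$_1$-factors $N$ and $M$, $\Hom(N,M^\infty)$ is a cancellative abelian topological semigroup carrying continuous actions of $\Out(N)$ and $\Out(B(H)\bar{\otimes} M)$ by semigroup automorphisms. Realizing the Grothendieck group as $(\Hom(N,M^\infty)\times\Hom(N,M^\infty))/\!\sim$ with the quotient topology, one checks in the usual way that $\mathcal{G}(N,M)$ is a topological group, that $\Hom(N,M^\infty)$ embeds into it homeomorphically, and that every continuous semigroup automorphism extends uniquely to a continuous group automorphism; this yields the canonical $\Out(N)$- and $\Out(B(H)\bar{\otimes} M)$-actions.

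Now let $N$ be separable and $M=X^\omega$ with $X$ McDuff. First, $M$ is itself McDuff: given a finite set $\mathcal{F}\subseteq X^\omega$, $\varepsilon>0$ and $n\in\mathbb{N}$, lift $\mathcal{F}$ to bounded sequences and, using the McDuff property of $X$, pick in each fibre a copy of $M_{2^n}$ that $\varepsilon$-commutes with the corresponding finite subset of $X$; the resulting copy of $M_{2^n}$ inside $X^\omega$ then $\varepsilon$-commutes with $\mathcal{F}$. Hence $M\cong M\bar{\otimes} R$, and since $\mathcal{F}(R)=\mathbb{R}_+$ also $\mathcal{F}(M)=\mathbb{R}_+$. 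Consequently $B(H)\bar{\otimes} M\cong B(H)\bar{\otimes} R\bar{\otimes} M=R_{0,1}\bar{\otimes} M$, where $R_{0,1}$ is the hyperfinite II$_\infty$-factor. Let $\theta\colon\mathbb{R}_+\to\Aut(R_{0,1})$ be a reparametrization of the trace-scaling flow, so $\operatorname{Tr}\circ\theta_t=t\operatorname{Tr}$, and set $\delta_t:=\theta_t\otimes\id_M\in\Aut(B(H)\bar{\otimes} M)$. This is a continuous homomorphism $\mathbb{R}_+\to\Aut(B(H)\bar{\otimes} M)$ scaling the trace by $t$, and composing with the quotient map onto $\Out(B(H)\bar{\otimes} M)$ gives the homomorphism $\delta\colon\mathcal{F}(M)=\mathbb{R}_+\to\Out(B(H)\bar{\otimes} M)$ of the statement. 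Since $\delta_t$ preserves the compact ideal $M^\infty$ and $[\delta_t\circ\pi]$ depends only on the outer class of $\delta_t$, the rule $t\cdot[\pi]:=[\delta_t\circ\pi]$ is a well-defined action of $\mathbb{R}_+$ on $\Hom(N,M^\infty)$ by continuous semigroup automorphisms — $\delta_t$ carries the isometries implementing $\oplus$ to another such pair, so $\delta_t\circ(\pi_1\oplus\pi_2)\sim(\delta_t\circ\pi_1)\oplus(\delta_t\circ\pi_2)$. This action extends to $\mathcal{G}(N,M)$; declaring $0\cdot v:=0$ and $(-t)\cdot v:=-(t\cdot v)$ for $t>0$ extends it to an action of the multiplicative monoid of $\mathbb{R}$, and one has $1\cdot v=v$, $(st)\cdot v=s\cdot(t\cdot v)$ (since $\delta$ is a homomorphism) and $t\cdot(v+w)=t\cdot v+t\cdot w$ (since $\delta_t$ commutes with $\oplus$); the scalar-multiplication axioms involving non-positive scalars then reduce formally to the case of positive scalars together with the group axioms of $\mathcal{G}(N,M)$.

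The only identity that is \emph{not} formal is distributivity over scalar addition, $(s+t)\cdot v=s\cdot v+t\cdot v$ for $s,t>0$, and it suffices to prove it for $v=[\pi]$. Since $\delta_{1/(s+t)}$ acts bijectively on $\mathcal{G}(N,M)$ and commutes with $\oplus$, applying it transforms the desired equality $[\delta_{s+t}\circ\pi]=[(\delta_s\circ\pi)\oplus(\delta_t\circ\pi)]$ into the equivalent statement $[\pi]=[(\delta_a\circ\pi)\oplus(\delta_b\circ\pi)]$, where $a=s/(s+t)$, $b=t/(s+t)$ and $a+b=1$. Choosing the isometries $w_1\otimes 1,\,w_2\otimes 1$ implementing $\oplus$ inside $R_{0,1}\otimes 1$ and unravelling the definitions, $(\delta_a\circ\pi)\oplus(\delta_b\circ\pi)=(\Psi\otimes\id_M)\circ\pi$, where $\Psi(y)=w_1\theta_a(y)w_1^*+w_2\theta_b(y)w_2^*$ is a unital normal $*$-endomorphism of $R_{0,1}$ which is trace-preserving precisely because $a+b=1$, but is \emph{not} inner (indeed not surjective). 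So the whole theorem comes down to
\[
(\Psi\otimes\id_M)\circ\pi\ \sim\ \pi \qquad\text{in } B(H)\bar{\otimes} M .
\]
I expect this to be the main obstacle, and it is precisely the point where the special structure of a McDuff ultraproduct is indispensable. The plan is: after conjugating $\pi$ we may assume $\pi(1)\in R_{0,1}\otimes 1$, so $\pi$ maps $N$ into a corner of the form $Q\bar{\otimes} M$ with $Q\cong R$; moreover, since corners of $B(H)\bar{\otimes} X^\omega$ are again (isomorphic to) ultrapowers of $X$, this corner is itself $\cong X^\omega$. Working inside this ultrapower one then constructs the unitary realizing $(\Psi\otimes\id_M)\circ\pi\sim\pi$ by an incremental, reindexing argument along $\omega$: using $M\cong M\bar{\otimes} R$ and, fibrewise, the McDuff property of $X$, one has enough approximately central hyperfinite room to realize $\Psi\otimes\id_M$ — up to inner perturbation, and up to unitary equivalence after composition with $\pi$ — as a limit of compressions and amplifications that $\pi$ can be conjugated to commute with, so that the two partial isometries assembling $(\delta_a\circ\pi)\oplus(\delta_b\circ\pi)$ are straightened into a single unitary. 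The subtlety lies entirely here: the order of the limits along $\omega$ must be controlled, and in a general II$_1$-factor target there is no such hyperfinite room, which is why distributivity is not expected in that generality.

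Granting the displayed equivalence, $\Hom(N,M^\infty)$ becomes a cancellative abelian semigroup with an everywhere-defined $\mathbb{R}_{\ge 0}$-scalar multiplication that is distributive in both variables, and the elementary algebraic lemma of the appendix then makes $\mathcal{G}(N,M)$ canonically an $\mathbb{R}$-vector space in which $\Hom(N,M^\infty)$ sits as a spanning cone and on which both $\Out$-actions are $\mathbb{R}$-linear. Finally, by \cite{Br} (with the axiomatic simplification of \cite{Ca-Fr}) the normalized part of $\Hom(N,M^\infty)$ — those $[\pi]$ with $\operatorname{Tr}(\pi(1))=1$ — is a complete convex-like metric space whose convex combination $\lambda[\pi_1]+(1-\lambda)[\pi_2]$ is exactly $\lambda\cdot[\pi_1]+(1-\lambda)\cdot[\pi_2]$ computed in the vector space just obtained; by \cite{Ca-Fr} it embeds as a closed convex subset of a Banach space, and transporting this embedding by the scaling action (which identifies $\Hom(N,M^\infty)$ with the cone over the normalized part) together with the universal property from the appendix yields an $\mathbb{R}$-linear injection of $\mathcal{G}(N,M)$ into a Banach space whose norm pulls back to a norm on $\mathcal{G}(N,M)$ inducing the point-$\sigma$-weak topology on $\Hom(N,M^\infty)$; completeness of the normalized part makes $\mathcal{G}(N,M)$ a Banach space.
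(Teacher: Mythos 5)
Your first part (the topological group and the $\Out$-actions) matches the paper's Section~2 and is fine. The second part, however, has two genuine gaps, and they sit exactly at the point the paper identifies as the crux. First, your claim that $M=X^\omega$ is itself McDuff, i.e.\ $X^\omega\cong X^\omega\bar\otimes R$, is not justified by the $\varepsilon$-commuting-matrices argument: that argument only shows that every separable subset of $X^\omega$ has a copy of $R$ (or of $M_{2^n}$) in its relative commutant, and the passage from approximately central matrix algebras to tensorial absorption is McDuff's theorem, whose intertwining/exhaustion proof requires a countable generating set. $X^\omega$ is nonseparable, and it is not known (and certainly does not follow from your argument) that $R^\omega\cong R^\omega\bar\otimes R$. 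Since your homomorphism $\delta_t=\theta_t\otimes\id_M$ is built on the identification $B(H)\bar\otimes M\cong R_{0,1}\bar\otimes M$, the very construction of your $\mathbb{R}_+$-action is unsupported. The paper never needs such global absorption: it only uses the fibrewise isomorphism $X\cong R\bar\otimes X$, defines ``standard isomorphisms'' $\theta_t\colon X^\omega\to p_tX^\omega p_t$ acting on the $R$-component of each fibre, and proves well-definedness of $t[\phi]$ only after composing with a $\phi$ whose image is separable, via a reindexing lemma (Lemma~\ref{lem:equivalence}, resting on Proposition~3.1.2 of \cite{Br}); this is precisely where separability of $N$ enters.

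Second, you yourself flag that the distributivity $(s+t)[\pi]=s[\pi]+t[\pi]$ is ``the only identity that is not formal,'' but what you offer for it is a plan (``an incremental, reindexing argument along $\omega$ \dots straightened into a single unitary''), not a proof; since this identity is the whole content of the vector-space claim, the proposal does not establish the theorem. The paper's actual mechanism is different and concrete: after cutting $\phi$ into $M_n(\mathbb{C})\otimes X^\omega\subseteq(R\bar\otimes X)^\omega$, it invokes the expansion lemma $[1\otimes\phi]=[\phi]$ (Lemma~\ref{lem:expansion}, the analogue of Lemma~3.2.3 in \cite{Br}) to adjoin a fresh tensor factor of $R$ commuting with the image, and then chooses the projections defining the standard isomorphisms of the form $p_s\otimes1\otimes1$ and $p_t\otimes1\otimes1$, orthogonal inside that fresh factor, so that $\Theta_s\circ(1\otimes\phi)+\Theta_t\circ(1\otimes\phi)=\Theta_{s+t}\circ(1\otimes\phi)$ holds on the nose; distributivity then follows from the independence-of-choices result (Proposition~\ref{prop:independent}). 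If you want to complete your route, you would need to supply an argument of exactly this kind in place of the sketched reindexing, and either drop or rigorously justify the absorption $X^\omega\cong X^\omega\bar\otimes R$ on which your $\delta$ depends. The remaining reductions in your write-up (extending the scalar action to $\mathbb{R}$, the appendix algebra, and the Banach-space structure via \cite{Ca-Fr}) are consistent with the paper.
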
 

{\bf Acknowledgement:} The authors would like to thank the referee for their fastidious proofreading  and several helpful suggestions that improved the exposition of this paper.

\section{Constructing the group $\mathcal{G} (N,M)$} 

With notation as in the introduction, our first task is to describe the semigroup structure on $\Hom(N, M^{\infty})$.

\begin{definition}\label{def:sum}
If $[\phi],[\psi]\in \Hom(N, M^{\infty})$, we define $$[\phi] + [\psi] := [\tilde{\phi} + \psi],$$ where $\tilde{\phi}$ is a representative of $[\phi]$ with the property that $\tilde{\phi} (1) \perp \psi(1)$.  
\end{definition}

Since $\phi(1)$ and $\psi(1)$ have finite trace, we can always find $\tilde{\phi}$ by simply choosing a unitary $u \in M \bar{\otimes} B(H)$ such that $u\phi(1) u^* \perp \psi(1)$ and declaring $\tilde{\phi} = \mathrm{Ad}u \circ \phi$.

\begin{lemma}\label{semigroup}
The operation $+$ is well-defined and makes $\Hom(N, M^{\infty})$ an abelian semigroup.
\end{lemma}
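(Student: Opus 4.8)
The plan is to check, in order: (i) that the recipe in Definition~\ref{def:sum} lands in $\Hom(N,M^\infty)$ at all, (ii) a single ``coherence'' statement about sums of homomorphisms with pairwise orthogonal units, and (iii) that (ii) forces well-definedness, commutativity and associativity. For (i), if $\alpha,\beta\colon N\to M^\infty$ are $*$-homomorphisms whose units $\alpha(1),\beta(1)$ are orthogonal projections, then $\alpha+\beta$ is again a $*$-homomorphism into $M^\infty$: it is linear and $*$-preserving with range in the linear subspace $M^\infty$, and it is multiplicative because the cross terms die, $\alpha(x)\beta(y)=\alpha(x)\alpha(1)\beta(1)\beta(y)=0$ and $\beta(x)\alpha(y)=0$, so $(\alpha+\beta)(x)(\alpha+\beta)(y)=\alpha(xy)+\beta(xy)=(\alpha+\beta)(xy)$. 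Together with the remark following Definition~\ref{def:sum} (which produces a representative $\tilde\phi$ of $[\phi]$ with $\tilde\phi(1)\perp\psi(1)$), this shows $[\tilde\phi+\psi]\in\Hom(N,M^\infty)$, so the real content of the lemma is invariance under all the choices, plus the two algebraic axioms.

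Now the key claim: \emph{if $\phi_1,\dots,\phi_n$ and $\phi_1',\dots,\phi_n'$ are $*$-homomorphisms $N\to M^\infty$ with $[\phi_i]=[\phi_i']$ for every $i$, and with $\phi_i(1)\perp\phi_j(1)$ and $\phi_i'(1)\perp\phi_j'(1)$ for all $i\neq j$, then $[\phi_1+\dots+\phi_n]=[\phi_1'+\dots+\phi_n']$.} To prove it I would choose unitaries $u_i\in B(H)\bar{\otimes}M$ with $\phi_i'=\Ad u_i\circ\phi_i$, put $p_i=\phi_i(1)$, $q_i=u_ip_iu_i^*=\phi_i'(1)$, $P=\sum_i p_i$, $Q=\sum_i q_i$ (finite-trace projections, by orthogonality), and $v=\sum_i u_ip_i$, a partial isometry with $v^*v=P$ and $vv^*=Q$. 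Since $B(H)\bar{\otimes}M$ is an infinite semifinite (type~II$_\infty$) factor and $\tau(1-P)=\tau(1-Q)=\infty$, the projections $1-P$ and $1-Q$ are Murray--von Neumann equivalent; let $v_0$ implement that equivalence. Then $w:=v+v_0$ is a unitary with $wp_i=u_ip_i$ for each $i$, so $w\phi_i(x)w^*=wp_i\phi_i(x)p_iw^*=u_i\phi_i(x)u_i^*=\phi_i'(x)$, and since the $\phi_i$ have mutually orthogonal ranges, conjugation by $w$ carries $\sum_i\phi_i$ to $\sum_i\phi_i'$, proving the claim.

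From the claim everything follows. Well-definedness of $+$, including independence of the representatives of both $[\phi]$ and $[\psi]$, is exactly the case $n=2$. Commutativity: $[\phi]+[\psi]=[\tilde\phi+\psi]$ while $[\psi]+[\phi]=[\tilde\psi+\phi]=[\phi+\tilde\psi]$ for suitable representatives, and applying the claim to the pairs $(\phi,\tilde\psi)$ and $(\tilde\phi,\psi)$ shows these are equal. Associativity: choosing, via the remark after Definition~\ref{def:sum}, a unitary $u$ with $u(\tilde\phi(1)+\psi(1))u^*\perp\rho(1)$, the homomorphisms $\Ad u\circ\tilde\phi$, $\Ad u\circ\psi$, $\rho$ have pairwise orthogonal units and represent $[\phi],[\psi],[\rho]$, whence $([\phi]+[\psi])+[\rho]$ is the class of their sum; running the same reduction on $[\phi]+([\psi]+[\rho])$ and invoking the case $n=3$ of the claim gives equality. (The zero homomorphism has unit $0$, orthogonal to every $\psi(1)$, so $[\phi]+[\mathbf{0}]=[\phi]$, recovering the neutral element of the introduction.)

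The one genuinely non-formal point is the claim, and inside it the step from ``pointwise'' data (a separate $u_i$ for each summand) to a \emph{single} unitary $w$ conjugating all summands at once. This is precisely where the special structure of $B(H)\bar{\otimes}M$ is used: one needs the complementary projections $1-P$ and $1-Q$ to be equivalent, which works because both have infinite trace in the semifinite factor $B(H)\bar{\otimes}M$. Everything else is bookkeeping with orthogonal ranges and reductions to this claim.
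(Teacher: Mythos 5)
Your proof is correct and follows essentially the same route as the paper: the key unitary $w=\sum_i u_ip_i+v_0$ (unitary pieces on the finite supports plus a partial isometry between the infinite-trace complements) is exactly the paper's construction $v=u\phi_1(1)+w$ in the case of two summands. The only difference is packaging: your $n$-fold coherence claim also delivers commutativity and associativity explicitly, which the paper leaves to the reader as routine exercises.
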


\begin{proof}  To see that $+$ is well defined, first suppose we have two representatives $\phi_1$ and $\phi_2$ of $[\phi]$, each with the property that $\phi_i(1) \perp \psi(1)$ ($i = 1,2$).  In this case, there is a unitary $u$ such that $\phi_2 = \mathrm{Ad}u \circ \phi_1$.  Choose a partial isometry $w$ such that $w^* w = 1 - \phi_1(1), ww^* = 1 - \phi_2(1)$ and $w \psi(1) = \psi(1) w = \psi(1)$. (This is possible because $1 - \phi_1(1)$ and $1 - \phi_2(1)$ are infinite projections dominating the finite projection $\psi(1)$.) Define a new unitary $$v := u \phi_1(1) + w$$ and a routine calculation shows $\phi_2 + \psi = \mathrm{Ad}v \circ (\phi_1 + \psi)$. 

Showing $+$ is independent of the representative of $[\psi]$ is similar, thus $+$ is well defined.  Checking commutativity and associativity are now routine exercises, so we leave the details to the reader. 
\end{proof}

\begin{remark}\label{rem:topologytometric}
{\rm  The ``point-wise convergence" topology on $\Hom(N, M^{\infty})$ can be viewed via pseudometrics, in the case $N$ is countably generated by contractions $\{ a_i \}$.  For example, an $\ell^2$ pseudometric such as $$
d([\phi],[\psi])=\inf_{u\in
U(M)}\left(\sum_{n=1}^\infty||\phi(\frac{1}{2^{n}}a_n)-u\psi(\frac{1}{2^{n}}a_n)u^*||_2^2\right)^{\frac{1}{2}},
$$ is easily seen to generate this topology, as would similar $\ell^p$ versions.  In some cases, like when $M$ is an ultraproduct, $d( \cdot, \cdot)$ becomes an honest metric (cf.\ \cite[Theorem 3.1]{Sh} and \cite[Proposition 3.1]{Br2}).}
\end{remark}

\begin{lemma}\label{topological}
$(\Hom(N, M^{\infty}),+)$ is a topological monoid with \begin{color}{red}
actions of $Out(N)$ and $Out(M)$ via continuous homeomorphisms.\end{color}
\end{lemma}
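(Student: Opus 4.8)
The plan is to check three things in order: that the class $[0]$ of the zero $*$-homomorphism $N\to M^{\infty}$ is a neutral element (so that Lemma~\ref{semigroup} upgrades the abelian semigroup to an abelian monoid); that $+$ is jointly continuous; and that the two groups act by continuous monoid homomorphisms whose inverses are again of this form, hence by homeomorphisms. The first point is immediate, since $0(1)=0\perp\psi(1)$ for every $\psi$, so $[0]+[\psi]=[0+\psi]=[\psi]$.

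For continuity of $+$ the difficulty is that Definition~\ref{def:sum} computes $[\phi]+[\psi]$ using a representative of $[\phi]$ adapted to the given $\psi$, and nothing forces such representatives to move continuously. I would circumvent this by fixing, once and for all, a pair of isometries $s_1,s_2\in B(H)\bar{\otimes} M$ with $s_1s_1^{*}+s_2s_2^{*}=1$; these exist because $B(H)\bar{\otimes} M$ is properly infinite (write $1_{B(H)}$ as a sum of two equivalent infinite projections and tensor with $1_M$). Set $\phi\boxplus\psi:=s_1\phi(\cdot)s_1^{*}+s_2\psi(\cdot)s_2^{*}$; since $s_1s_1^{*}\perp s_2s_2^{*}$ this is a $*$-homomorphism $N\to M^{\infty}$. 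The crucial sublemma is that $[s_i\phi(\cdot)s_i^{*}]=[\phi]$ for every $\phi$ and $i=1,2$: here $\phi(1)$ and $s_i\phi(1)s_i^{*}$ are finite projections of equal (finite) trace in the type II$_{\infty}$ factor $B(H)\bar{\otimes} M$, so their complements $1-\phi(1)$ and $1-s_i\phi(1)s_i^{*}$ are each Murray--von Neumann equivalent to $1$ --- a standard fact, for which the ingredients are separability of $H$, the comparison theorem, and Schr\"oder--Bernstein for von Neumann algebras --- and one can then pick a partial isometry $w$ with $w^{*}w=1-\phi(1)$, $ww^{*}=1-s_i\phi(1)s_i^{*}$ and verify that $u:=s_i\phi(1)+w$ is a unitary with $\mathrm{Ad}(u)\circ\phi=s_i\phi(\cdot)s_i^{*}$. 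Combining this sublemma with Definition~\ref{def:sum} (applied to the already-orthogonal pair $s_1\phi(\cdot)s_1^{*}$, $s_2\psi(\cdot)s_2^{*}$) gives $[\phi\boxplus\psi]=[\phi]+[\psi]$, and --- this is the point --- the identity persists when $\phi,\psi$ are replaced by \emph{any} representatives. Consequently, if $[\phi_n]\to[\phi]$ and $[\psi_n]\to[\psi]$, we may choose representatives with $\tilde\phi_n\to\phi$ and $\tilde\psi_n\to\psi$ pointwise $\sigma$-weakly, and since $a\mapsto s_i a s_i^{*}$ and addition are $\sigma$-weakly continuous we get $\tilde\phi_n\boxplus\tilde\psi_n\to\phi\boxplus\psi$ pointwise $\sigma$-weakly; as $[\phi_n]+[\psi_n]=[\tilde\phi_n\boxplus\tilde\psi_n]$ and $[\phi]+[\psi]=[\phi\boxplus\psi]$, this is exactly $[\phi_n]+[\psi_n]\to[\phi]+[\psi]$. (When $N$ is not countably generated one runs this argument with nets rather than sequences.)

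For the actions, $\mathrm{Out}(N)$ acts by pre-composition, $\bar\theta\cdot[\phi]:=[\phi\circ\theta^{-1}]$, and $\mathrm{Out}(B(H)\bar{\otimes} M)$ acts by post-composition, $\bar\Theta\cdot[\phi]:=[\Theta\circ\phi]$; the latter restricts to an action of $\mathrm{Out}(M)$ through the homomorphism $\bar\sigma\mapsto\overline{\mathrm{id}_{B(H)}\otimes\sigma}$, and every automorphism of $B(H)\bar{\otimes} M$ preserves $M^{\infty}$ because finiteness of projections is an algebraic invariant. Well-definedness on the outer groups is the standard absorption trick: if $\theta=\mathrm{Ad}(v)$ with $v\in U(N)$, then $\tilde v:=\phi(v)+(1-\phi(1))$ is a unitary of $B(H)\bar{\otimes} M$ with $\mathrm{Ad}(\tilde v^{*})\circ\phi=\phi\circ\mathrm{Ad}(v^{*})$, so $\bar\theta$ acts trivially; and if $\Theta=\mathrm{Ad}(w)$ then $[\Theta\circ\phi]=[\phi]$ directly. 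Each group element acts as a monoid homomorphism (it intertwines $\boxplus$, hence $+$, and preserves orthogonality of the units), and continuity is immediate --- pre-composition by a fixed $\theta$ merely reindexes the defining pointwise limits, and a fixed $\Theta$ is $\sigma$-weakly continuous --- while the inverse action is again of the same shape, so each acts by a continuous homeomorphism. I expect the one genuinely non-routine ingredient to be the sublemma $[s_i\phi(\cdot)s_i^{*}]=[\phi]$, equivalently $1-p\sim 1$ for finite projections $p\in B(H)\bar{\otimes} M$: it is what legitimizes the fixed frame $s_1,s_2$ and thereby reduces continuity of the representative-dependent operation $+$ to continuity of a single fixed normal map.
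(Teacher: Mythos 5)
Your proof is correct, but the heart of it---continuity of $+$---is argued differently from the paper. The paper works directly with Definition~\ref{def:sum}: given $[\phi_n]\to[\phi]$, $[\psi_n]\to[\psi]$, it picks representatives with $\phi(1)\perp\psi(1)$ and unitaries $u_n$ with $u_n^*\phi_n(1)u_n\perp\psi_n(1)$, and then uses that $\phi_n(1)$ and $\psi_n(1)$ are already asymptotically orthogonal to arrange $u_npu_n^*\to p$ $\sigma$-weakly on finite projections, so that $u_n^*\phi_nu_n+\psi_n\to\phi+\psi$ pointwise. You instead import the Cuntz-addition trick from Ext/K-theory: fix isometries $s_1,s_2$ with orthogonal ranges summing to $1$, prove the absorption sublemma $[s_i\phi(\cdot)s_i^*]=[\phi]$ (your unitary $u=s_i\phi(1)+w$ works, and the needed equivalence of the infinite "complement" projections is legitimate since $B(H)\bar{\otimes}M$ is a countably decomposable factor, $M$ being a II$_1$-factor), and conclude that the single fixed normal map $(\phi,\psi)\mapsto s_1\phi(\cdot)s_1^*+s_2\psi(\cdot)s_2^*$ represents $+$ on classes; continuity is then immediate from normality of $\mathrm{Ad}(s_i)$, with no need to choose representative-dependent unitaries that move continuously. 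This buys a cleaner and more robust continuity argument (it sidesteps the paper's somewhat delicate "we may further assume $u_npu_n^*\to p$" step), at the cost of the extra absorption sublemma; the paper's route stays closer to the definition of the sum. Your treatment of the actions matches the paper's (pre-/post-composition), and is in places more explicit: you spell out the absorption unitary $\phi(v)+(1-\phi(1))$ showing inner automorphisms of $N$ act trivially, and you explain how $\mathrm{Out}(M)$ acts through $\sigma\mapsto\mathrm{id}_{B(H)}\otimes\sigma$, which the paper's statement (which oscillates between $\mathrm{Out}(M)$ and $\mathrm{Out}(B(H)\bar{\otimes}M)$) leaves implicit.
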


\begin{proof} The zero homomorphism $N \to M^{\infty}$ evidently gives rise to an identity element in $\Hom(N, M^{\infty})$, hence we have a monoid. 

To see that $+$ is continuous, suppose $[\phi_n]\rightarrow[\phi]$ and
$[\psi_n]\rightarrow[\psi]$. Changing representatives if necessary, we may assume $\phi(1)\perp\psi(1)$, $\phi_n \to \phi$ and $\psi_n \to \psi$ (point-$\sigma$-weakly). Let $u_n$ be a sequence of
unitaries  such that
$u_n^*\phi_n(1)u_n\perp\psi_n(1)$.  Since $\phi_n(1)$ and $\psi_n(1)$ are asymptotically orthogonal already, we may further assume that $u_n p u_n^* \to p$ $\sigma$-weakly, for every finite projection $p \in M \bar{\otimes} B(H)$. It follows that
$[\phi_n]+[\psi_n]=[u_n^*\phi_nu_n+\psi_n]$ and  $(u_n^*\phi_nu_n+\psi_n) \to (\phi + \psi)$ point-$\sigma$-weakly,  so our monoid is topological. 

Actions of the outer automorphism groups $Out(N)$ and $Out(B(H) \bar{\otimes} M)$ are given by pre- and post-composition, respectively: $\alpha.[\phi] = [\phi \circ \alpha^{-1}]$ for all $\alpha \in Out(N)$ and $\beta.[\phi] = [\beta \circ \phi]$ for all $\beta \in Out(B(H) \bar{\otimes} M)$.  Proving these two actions are monoidal homeomorphisms are very similar, so we only do it for $Out(N)$.  

It is routine to check that $\alpha.[\phi] = [\phi \circ \alpha^{-1}]$ is well defined, since different representatives of $\alpha \in Out(N)$ differ by inner automorphisms.  As for continuity, \begin{color}{red}choosing the right representatives for the classes $[\phi_n]$ and $[\phi]$, one has\end{color}
\begin{align*}
[\phi_n]\rightarrow[\phi]\Leftrightarrow\\
&\Leftrightarrow\phi_n(x)\rightarrow\phi(x),
\forall x\in
N\\
&\Leftrightarrow\phi_n(\alpha^{-1}(x))\rightarrow\phi(\alpha^{-1}(x)),\forall x\\
&\Leftrightarrow(\phi_n\circ\alpha^{-1})(x)\rightarrow(\phi\circ\alpha^{-1})(x),\forall
x\in N\\
&\Leftrightarrow[\phi_n\circ\alpha^{-1}]\rightarrow[\phi\circ\alpha^{-1}]\\
&\Leftrightarrow\alpha.[\phi_n]\rightarrow\alpha.[\phi].
\end{align*} 
Similarly, a calculation shows $\alpha.(\cdot)$ preserves $+$: 
\begin{align*}
\alpha.([\phi]+[\psi])=\\
&=\alpha.[u\phi u^*+\psi]\\
&=[(u\phi
u^*+\psi)\circ\alpha^{-1}]\\
&=[u(\phi\circ\alpha^{-1})u^*+(\psi\circ\alpha^{-1})]\\
&=\alpha.[\phi]+\alpha.[\psi]
\end{align*}
Finally, it is clear that $\alpha.(\cdot)$ is a bijection with (continuous) inverse $\alpha^{-1}.(\cdot)$, so the proof is complete. 
\end{proof} 

Now we move towards the cancelation property. We need the following

\begin{lemma}\label{lem:mvnequivalence}
Given a morphism $\phi:N\rightarrow M^{\infty}$ and
projections $p,q\in\phi(N)'\cap M^{\infty}$, with $p,q\leq\phi(1)$. The
following are equivalent:
\begin{enumerate}
\item There exists a partial isometry $v\in \phi(1) M^{\infty} \phi(1)$ such that $vv^*=q$,
$v^*v=p$ and $v\phi(x)v^*=q\phi(x)$, for all $x\in N$.
\item $p\sim q$ in $\phi(N)'\cap \phi(1) M^{\infty} \phi(1)$.
\item $[p\phi]=[q\phi]$, where $p\phi:N\rightarrow M$ is defined by
$x\rightarrow p\phi(x)$.
\end{enumerate}
\end{lemma}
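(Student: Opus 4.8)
The plan is to prove the cycle $(2)\Rightarrow(1)\Rightarrow(3)\Rightarrow(2)$. The equivalence $(1)\Leftrightarrow(2)$ and the implication $(3)\Rightarrow(1)$ are essentially formal, resting on two small observations: a partial isometry satisfying the intertwining identity in $(1)$ automatically commutes with $\phi(N)$, so that $(1)$ and $(2)$ literally say the same thing; and evaluating an implementing unitary at $1\in N$ reduces the global conjugacy of $(3)$ to a Murray--von Neumann equivalence of $p$ and $q$ themselves, precisely because $p,q\le\phi(1)$. The only step that uses structural input is $(1)\Rightarrow(3)$, and even there the input is mild --- semifiniteness of $B(H)\bar{\otimes}M$.

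For $(2)\Rightarrow(1)$: if $v$ implements $p\sim q$ inside $\phi(N)'\cap\phi(1)M^\infty\phi(1)$, then $v\phi(x)=\phi(x)v$ and $vv^*=q$ give $v\phi(x)v^*=\phi(x)q=q\phi(x)$. Conversely, given $v$ as in $(1)$, I multiply $v\phi(x)v^*=q\phi(x)$ on the right by $v$ and use $v=vp$, $p,q\in\phi(N)'$ and $qv=v$ to obtain $v\phi(x)=vp\phi(x)=v\phi(x)p=q\phi(x)v=\phi(x)qv=\phi(x)v$; thus $v\in\phi(N)'\cap\phi(1)M^\infty\phi(1)$, which is $(2)$.

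For $(1)\Rightarrow(3)$: one must enlarge the partial isometry $v$ of $(1)$ to a unitary of $B(H)\bar{\otimes}M$. Since $\phi(1)$ lies in the compact ideal it has finite trace, so $1-p$ and $1-q$ are projections of infinite trace in the type $\mathrm{II}_\infty$ factor $B(H)\bar{\otimes}M$, hence Murray--von Neumann equivalent; choose $w$ with $w^*w=1-p$ and $ww^*=1-q$, and set $u:=v+w$. A short check shows $u$ is unitary, and using $p\phi(x)=\phi(x)p=p\phi(x)p$ (valid since $p\in\phi(N)'$) together with $wp=0=pw^*$, all cross terms vanish, leaving $u\,p\phi(x)\,u^*=v\phi(x)v^*=q\phi(x)$. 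Hence $q\phi=\Ad u\circ(p\phi)$ and $[p\phi]=[q\phi]$.

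For $(3)\Rightarrow(1)$: by definition of the equivalence relation there is a unitary $u\in B(H)\bar{\otimes}M$ with $u\,p\phi(x)\,u^*=q\phi(x)$ for all $x\in N$ (replacing $u$ by $u^*$ if necessary). Putting $x=1$ and using $p\phi(1)=p$, $q\phi(1)=q$ yields $upu^*=q$. Set $v:=up$; then $v^*v=p$, $vv^*=upu^*=q$, and $v\phi(x)v^*=u\,p\phi(x)\,u^*=q\phi(x)$. Finally $v\in M^\infty$ because $M^\infty$ is an ideal containing $p$, and $v\phi(1)=v$ while $\phi(1)v=\phi(1)qv=qv=v$ (using $q\le\phi(1)$ and $qv=v$), so $v\in\phi(1)M^\infty\phi(1)$ --- which is exactly $(1)$. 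As anticipated, the sole nonformal ingredient is the unitary extension in $(1)\Rightarrow(3)$, which uses that the dimension function on the semifinite factor $B(H)\bar{\otimes}M$ is a complete invariant for Murray--von Neumann equivalence of projections.
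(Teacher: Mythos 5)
Your proof is correct. It differs from the paper's on the one step that carries real content. The paper proves the cycle $(1)\Rightarrow(2)\Rightarrow(3)\Rightarrow(1)$, and its nontrivial step is $(2)\Rightarrow(3)$: there one picks the second partial isometry $w$ \emph{inside} the relative commutant $\phi(N)'\cap\phi(1)M^{\infty}\phi(1)$, with $w^*w=\phi(1)-p$ and $ww^*=\phi(1)-q$, which is possible because that relative commutant is a finite von Neumann algebra (so $p\sim q$ forces the complements within $\phi(1)$ to be equivalent there); the resulting unitary $u=v+w$ then commutes with $\phi(N)$, and one extends it trivially by $1-\phi(1)$. You instead prove $(1)\Leftrightarrow(2)$ and make $(1)\Rightarrow(3)$ the substantive step, extending $v$ by an arbitrary partial isometry $w$ between $1-p$ and $1-q$ in the ambient algebra $B(H)\bar{\otimes}M$; your $u=v+w$ need not commute with $\phi(N)$, but the corner relations $wp=0=pw^*$ kill the cross terms, which is all that is needed. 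What each buys: the paper's route uses the finiteness of the relative commutant and keeps the implementing unitary in the commutant of $\phi(N)$ (conceptually parallel to how the lemma is applied later); your route trades that for the equivalence of the infinite projections $1-p$ and $1-q$ in $B(H)\bar{\otimes}M$ --- the same device the paper itself uses in Lemma \ref{semigroup} --- at the cost of an extra (easy) implication $(2)\Rightarrow(1)$. One small point worth making explicit: equal (infinite) trace implies equivalence because $B(H)\bar{\otimes}M$ is a countably decomposable semifinite factor ($M$ carries a faithful normal tracial state and $H$ is separable); your appeal to the dimension function implicitly uses this, but it is at the same level of detail as the paper's own argument. Your $(3)\Rightarrow(1)$ and the commutation computation for $(1)\Rightarrow(2)$ coincide with the paper's.
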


\begin{proof}
$1)\Rightarrow2)$. It suffices to show that $v$ commutes with
$\phi(x)$, for all $x\in N$. Indeed
\begin{color}{red}
\begin{align*}
v^*\phi(x)\\
&=v^*q\phi(x)\\
&=v^*v\phi(x)v^*\\
&=p\phi(x)v^*\\
&=\phi(x)v^*
\end{align*}
\end{color}

$2)\Rightarrow3)$. Choose partial isometries $v \in\phi(N)'\cap \phi(1) M^{\infty} \phi(1)$ and $w \in \phi(N)'\cap \phi(1) M^{\infty} \phi(1)$
such that $v^*v=p, vv^*=q$, $w^*w=p^\perp$ and $ww^*=q^\perp$. (It is possible to find $w$ since $\phi(N)'\cap \phi(1) M^{\infty} \phi(1)$ is a finite von Neumann algebra.) Hence
$u=v+w\in\phi(N)'\cap \phi(1) M^{\infty} \phi(1)$ is a unitary and
$$
up\phi(x)u^*=upu^*\phi(x)=q\phi(x).
$$
Extending $u$ to a unitary in $B(H) \bar{\otimes} M$ we see  $[p\phi]=[q\phi]$.

$3)\Rightarrow1)$. Choose a unitary $u\in B(H) \bar{\otimes} M$ such that
$up\phi(x)u^*=q\phi(x)$, for all $x\in N$. Define $v=up$ and, using the assumption that $p,q \leq \phi(1)$, one can check this does the trick. 
\end{proof}

\begin{proposition}\label{cancellation}
$\Hom(N, M^{\infty})$ has cancellation, i.e., 
$$
[\rho]+[\phi]=[\rho]+[\psi]\Rightarrow[\phi]=[\psi].
$$
\end{proposition}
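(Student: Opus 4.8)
The plan is to reduce the cancellation statement to an elementary fact about complementary projections in a finite von Neumann algebra, with Lemma~\ref{lem:mvnequivalence} doing all the real work.

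First I would unwind Definition~\ref{def:sum} on both sides of the hypothesis. Choose representatives $\rho_1,\rho_2$ of $[\rho]$ with $\rho_1(1)\perp\phi(1)$ and $\rho_2(1)\perp\psi(1)$, so that $[\rho]+[\phi]=[\rho_1+\phi]$ and $[\rho]+[\psi]=[\rho_2+\psi]$; the hypothesis then says $[\rho_1+\phi]=[\rho_2+\psi]$. Conjugating by a suitable unitary $u\in B(H)\bar\otimes M$, replace $\rho_2$ and $\psi$ by $u\rho_2 u^*$ and $u\psi u^*$ (which does not change $[\rho]$ or $[\psi]$, nor the orthogonality $\rho_2(1)\perp\psi(1)$), so that after relabelling the two sums become one and the same morphism $\Phi:=\rho_1+\phi=\rho_2+\psi:N\to M^\infty$. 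Now set $p:=\phi(1)$, $q:=\psi(1)$, $r:=\rho_1(1)$, $r':=\rho_2(1)$. A routine check using $\rho_1(1)\perp\phi(1)$ and $\rho_2(1)\perp\psi(1)$ shows these are projections in $\Phi(N)'\cap M^\infty$ lying below $\Phi(1)$, with $p\perp r$, $q\perp r'$, and $p+r=\Phi(1)=q+r'$; moreover $p\Phi=\phi$, $r\Phi=\rho_1$, $r'\Phi=\rho_2$, and $q\Phi$ is the chosen representative of $[\psi]$. In particular $[r\Phi]=[\rho]=[r'\Phi]$.

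Next, apply Lemma~\ref{lem:mvnequivalence}, implication $(3)\Rightarrow(2)$, to the equality $[r\Phi]=[r'\Phi]$: this gives $r\sim r'$ in the von Neumann algebra $A:=\Phi(N)'\cap\Phi(1)M^\infty\Phi(1)$, which is finite because $\Phi(1)$ has finite trace (so $\Phi(1)M^\infty\Phi(1)$ is a finite corner of the semifinite algebra $B(H)\bar\otimes M$, and $A$ is a von Neumann subalgebra of it). In a finite von Neumann algebra, Murray--von Neumann equivalent projections have Murray--von Neumann equivalent complements (e.g.\ by comparing the center-valued trace), so $r\sim r'$ in $A$ yields $\Phi(1)-r\sim\Phi(1)-r'$ in $A$, that is, $p\sim q$ in $A$. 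Feeding this back through Lemma~\ref{lem:mvnequivalence}, implication $(2)\Rightarrow(3)$, gives $[p\Phi]=[q\Phi]$, i.e.\ $[\phi]=[\psi]$, which is the assertion.

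The one place that demands care, rather than difficulty, is the bookkeeping in the first step: one must verify that the single conjugation used to identify the two sums does not spoil any of the orthogonality relations, and that after the identifications all four projections $p,q,r,r'$ genuinely lie in $\Phi(N)'\cap\Phi(1)M^\infty\Phi(1)$ and not merely in the ambient algebra — this is exactly what makes the lemma applicable. The remaining ingredients, namely finiteness of the corner $\Phi(1)M^\infty\Phi(1)$ and the stability of equivalence under passing to complements in a finite algebra, are standard, and everything else is a direct invocation of Lemma~\ref{lem:mvnequivalence}.
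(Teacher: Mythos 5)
Your proposal is correct and follows essentially the same route as the paper: after conjugating so that both sums agree as a single morphism $\Phi$, you apply Lemma~\ref{lem:mvnequivalence} (3)$\Rightarrow$(2) to the two copies of $\rho$'s support, pass to complements in the finite relative commutant $\Phi(N)'\cap\Phi(1)M^\infty\Phi(1)$, and conclude with (2)$\Rightarrow$(3), exactly as in the paper's proof. The only differences are cosmetic bookkeeping (the paper normalizes $\phi(1)=\psi(1)$ and conjugates one sum onto the other), plus your welcome explicit justification of finiteness and of complement equivalence.
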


\begin{proof}
We may assume that $\phi(1) = \psi(1)$ (since they have the same trace) and $\phi(1) \perp \rho(1)$. Let
$u\in M \bar{\otimes} B(H)$ be a unitary such that
$\rho+\phi=u(\rho+\psi)u^*$ and set $p=\rho(1)$ and
$q=u\rho(1)u^*$. Then $p(\rho+\phi)=\rho$ and
$q(\rho+\phi)= q(u(\rho+\psi)u^*) = u\rho u^*$. It follows that
$[p(\rho+\phi)]=[q(\rho+\phi)]$ and so, by Lemma \ref{lem:mvnequivalence}, $p$
and $q$ are Murray-von Neumann equivalent inside $((\rho+\phi)(N))'\cap (\rho+\phi)(1)M(\rho+\phi)(1)$; hence, so are $(\rho+\phi)(1) - p = \phi(1)$ and $(\rho+\phi)(1) - q = u\psi(1)u^*$. Therefore, using
once again Lemma \ref{lem:mvnequivalence}, we get
$$
[\phi]=[\phi(1)(\rho+\phi)]=[u\psi(1)u^*(u(\rho+\psi)u^*)]=[u\psi u^*]=[\psi].
$$
\end{proof}

Thanks to cancelation, $\Hom(N, M^{\infty})$ embeds into its
Grothendiek group $\mathcal G(N,M)$. Note that
$\mathcal G(N,M)$ carries a canonical topology, given by the
quotient of the product topology. As one would hope, the
main properties of $\Hom(N, M^{\infty})$ are inherited by $\mathcal
G(N,M)$.

\begin{proposition}\label{quotient}
The group $\mathcal G(N,M)$ is a topological abelian group. Moreover
$Out(N)$ and $Out(M)$ act on $\mathcal G(N,M)$ via \begin{color}{red}
continuous group homeomorphisms.\end{color}
\end{proposition}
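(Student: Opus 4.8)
The plan is to deduce everything about $\mathcal{G}(N,M)$ from the corresponding facts about $\Hom(N, M^{\infty})$ established in Lemma \ref{topological}, by pushing them through the Grothendieck construction. Recall that as a set $\mathcal{G}(N,M)$ is $\big(\Hom(N, M^{\infty}) \times \Hom(N, M^{\infty})\big)/\!\sim$, where $([\phi_1],[\psi_1]) \sim ([\phi_2],[\psi_2])$ iff there is $[\chi]$ with $[\phi_1]+[\psi_2]+[\chi] = [\phi_2]+[\psi_1]+[\chi]$; by cancellation (Proposition \ref{cancellation}) the $[\chi]$ is superfluous. I equip $\mathcal{G}(N,M)$ with the quotient of the product topology on $\Hom(N, M^{\infty}) \times \Hom(N, M^{\infty})$, and I write the class of $([\phi],[\psi])$ as $[\phi]-[\psi]$.

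First I would record that $\mathcal{G}(N,M)$ is an abelian group: this is the standard Grothendieck group computation, with the only point worth a sentence being that the embedding $\Hom(N, M^{\infty}) \hookrightarrow \mathcal{G}(N,M)$, $[\phi] \mapsto [\phi]-[0]$, is injective precisely because of cancellation. Next, continuity of addition and of inversion: addition on $\mathcal{G}(N,M)$ is induced coordinatewise from addition on $\Hom(N, M^{\infty}) \times \Hom(N, M^{\infty})$, which is continuous by Lemma \ref{topological} (a finite product of topological monoids is a topological monoid), and the quotient map is open since it is the quotient by a group action (translation by the "diagonal" subsemigroup), so the induced operations on the quotient are continuous. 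Inversion $[\phi]-[\psi] \mapsto [\psi]-[\phi]$ is induced by the coordinate flip on the product, which is a homeomorphism, so it descends to a continuous (indeed homeomorphic) map. Hence $\mathcal{G}(N,M)$ is a topological abelian group.

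For the actions, the point is that $\Out(N)$ and $\Out(B(H)\bar\otimes M)$ act on $\Hom(N, M^{\infty})$ by monoidal homeomorphisms (Lemma \ref{topological}), and the Grothendieck construction is functorial: a monoid homomorphism extends uniquely to a group homomorphism of Grothendieck groups, compatibly with composition, so we get honest group actions on $\mathcal{G}(N,M)$ by $\alpha.([\phi]-[\psi]) = [\phi\circ\alpha^{-1}] - [\psi\circ\alpha^{-1}]$ and similarly for $\Out(B(H)\bar\otimes M)$. Each such map is a group automorphism of $\mathcal{G}(N,M)$; for continuity I again use that it is the descent of the product map $(\alpha.\,,\alpha.\,)$ on $\Hom(N, M^{\infty}) \times \Hom(N, M^{\infty})$, which is a homeomorphism by Lemma \ref{topological}, through the open quotient map. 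Thus each element of $\Out(N)$ (resp.\ $\Out(B(H)\bar\otimes M)$) acts by a homeomorphic group automorphism, and the assignment is a group action since it is so already on the dense sub-monoid and both sides are continuous.

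I do not expect a serious obstacle here — the proposition is a formal consequence of Lemma \ref{topological} together with the fact that Grothendieck completion is a functor that, for topological monoids with open translation maps, produces topological groups. The only step requiring a moment's care is checking that the quotient map $\Hom(N, M^{\infty}) \times \Hom(N, M^{\infty}) \to \mathcal{G}(N,M)$ is open, so that continuity of the induced operations follows; this holds because the equivalence relation is given by the action of the (abelian) diagonal sub-semigroup together with cancellation, i.e.\ the fibers are translates of a fixed set, and one verifies openness directly from the definition of the product topology. Everything else is the routine bookkeeping already flagged as "left to the reader" in the analogous monoid statements.
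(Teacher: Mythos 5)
Your overall strategy is the paper's: deduce everything from Lemma \ref{topological} by pushing it through the Grothendieck construction, with the actions handled by functoriality. The gap is in the one step you yourself flag as the only delicate one, namely continuity of the induced addition. You justify it by claiming the quotient map $q\colon \Hom(N,M^\infty)\times\Hom(N,M^\infty)\to\mathcal G(N,M)$ is open ``since it is the quotient by a group action (translation by the diagonal subsemigroup).'' But there is no group action here: the diagonal copy of $\Hom(N,M^\infty)$ is only a cancellative sub-\emph{monoid}, translation by $[\rho]$ is injective but not surjective and not a homeomorphism of the space, and the Grothendieck equivalence class of $([\phi],[\psi])$ is not a translate of a fixed set --- it is the full saturation $\{([\phi'],[\psi']): [\phi']+[\psi]=[\phi]+[\psi']\}$, a union over the whole semigroup. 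The standard argument that orbit maps of group actions are open (saturate an open set by homeomorphic translates) therefore does not apply, and openness of $q$ is exactly what your reduction needs, since without it $q\times q$ need not be a quotient map and continuity of $+$ does not descend. You say one ``verifies openness directly,'' but no such verification is given, and it is not a formality: to show the saturation of a basic open set is open one would have to solve $a'+d'=c'+b'$ for $(a',b')$ near $(a,b)$ given $(c',d')$ near $(c,d)$, which requires some form of approximate subtraction in the monoid that you have not established.

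The paper avoids this issue entirely: it works with convergence of classes, lifts convergent sequences in $\mathcal G(N,M)$ to convergent representatives in the product $\Hom(N,M^\infty)\times\Hom(N,M^\infty)$, adds those using Lemma \ref{topological}, and then checks that the resulting sums represent the correct Grothendieck classes by writing out the two witnessing identities $[\phi_n]'+[\psi_n]+[\rho]=[\phi_n]+[\psi_n]'+[\rho]$ and $[\beta_n]'+[\gamma_n]+[\sigma]=[\beta_n]+[\gamma_n]'+[\sigma]$ and adding them. To repair your proof you should either carry out this representative-level argument, or supply an honest proof that $q$ is open (e.g.\ via the explicit pseudometric description of the topology in Remark \ref{rem:topologytometric}); the group-action shortcut as stated is not available. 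Your treatment of the group structure, inversion, and the $\Out(N)$ and $\Out(B(H)\bar\otimes M)$ actions is fine and matches what the paper leaves to the reader.
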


\begin{proof}
$\mathcal G(N,M)$ is an abelian group. In order to prove that the
sum is continuous, let us fix a piece of notation:
$[([\phi],[\psi])]_{\mathcal G}$ denotes the class of
$([\phi],[\psi])\in\Hom(N,M^\infty )\times \Hom(N,M^\infty)$ with respect to
the Grothendieck equivalence relation, which will be denoted by $\sim_{\mathcal G}$. Now suppose that $
[([\phi_n],[\psi_n])]_{\mathcal
G}\rightarrow[([\phi],[\psi])]_{\mathcal G}$ and
$[([\beta_n],[\gamma_n])]_{\mathcal
G}\rightarrow[([\beta],[\gamma])]_{\mathcal G}$.
\begin{color}{red}
This means that there are representatives $([\phi_n]',[\psi_n]')\sim_{\mathcal G}([\phi_n],[\psi_n])$, $([\phi]',[\psi]')\sim_{\mathcal G}([\phi],[\psi])$, $([\beta_n]',[\gamma_n]')\sim_{\mathcal G}([\beta_n],[\gamma_n])$, and $([\beta]',[\gamma]')\sim_{\mathcal G}([\beta],[\gamma])$  such that
$$
([\phi_n]',[\psi_n]')\to([\phi]',[\psi]')\qquad\text{and}\qquad([\beta_n]',[\gamma_n]')\to([\beta]',[\gamma]')
$$
in the product topology of $\mathbb Hom(N,M^\infty)\times\mathbb Hom(N,M^\infty)$. Thus, there are representatives $[\tilde{\cdot}]'$ of $[\cdot]'$ such that
$$
[\tilde\phi_n]'\to[\tilde\phi]'\qquad[\tilde\psi_n]'\to[\tilde\psi]'\qquad[\tilde\beta_n]'\to[\tilde\beta]'\qquad[\tilde\gamma_n']\to[\tilde\gamma]'
$$
in $\mathbb Hom(N,M^\infty)$. By Lemma \ref{topological}, it follows that
$$
([\tilde\phi_n]'+[\tilde\beta_n]',[\tilde\psi_n]'+[\tilde\gamma_n]')\to([\tilde\phi]'+[\tilde\beta]',[\tilde\psi]'+[\tilde\gamma]')
$$
Therefore, it suffices to show that
\begin{align}\label{eq:first}
([\tilde\phi_n]'+[\tilde\beta_n]',[\tilde\psi_n]'+[\tilde\gamma_n]')\sim_{\mathcal G}([\phi_n]+[\beta_n],[\psi_n]+[\gamma_n])
\end{align}
and 
\begin{align}\label{second}
([\tilde\phi]'+[\tilde\beta]',[\tilde\psi]'+[\tilde\gamma]')\sim_{\mathcal G}([\phi]+[\beta],[\psi]+[\gamma])
\end{align}
Since the proofs are very similar, we show only (\ref{eq:first}). First observe that we can take out all the tilda's without modifying the equivalence classes, then, by the very definition of the Grothendieck construction, let $[\rho]$ and $[\sigma]$ be such that
$$
[\phi_n]'+[\psi_n]+[\rho]=[\phi_n]+[\psi_n]'+[\rho] \qquad\text{and}\qquad[\beta_n]'+[\gamma_n]+[\sigma]=[\beta_n]+[\gamma_n]'+[\sigma]. 
$$
One can now obtain (\ref{eq:first}) just summing these two equalities.
\end{color}

The actions of $Out(N)$ and $Out(M\bar{\otimes}B(H))$ are defined in the obvious way and checking they are well defined and yield continuous group actions is a routine exercise left to the reader. 
\end{proof}

The group $\mathcal G(N,M)$ may be trivial, for instance if $N$ has property (T) and $M$ has the Haagerup property (cf. \cite{CJ}). At the other extreme, if $M=\mathbb R^\omega$ and $N \subset M$ is any non-hyperfinite subfactor, then $\mathcal G(N,M)$ is nonseparable; and  $\mathcal G(N,M)$ is a point if $N$ is hyperfinite (see \cite{Br}).  It would be nice to find examples that lie between these extremes.

\section{Fundamental groups} 

Recall that the fundamental group $\mathcal{F}(M)$ is the set of $t > 0$ such that $M \cong M_t$, where $M_t = p_t M^{\infty} p_t$ for some projection $p_t$ of trace $t$.  Elements of $\mathcal{F}(M)$ give rise to trace-scaling automorphisms of $B(H) \bar{\otimes} M$, but there need not be a group homomorphism $\delta \colon \mathcal{F}(M) \to Out(B(H) \bar{\otimes} M)$ (cf.\ \cite{PV}).  Of course, when such a homomorphism exists we get actions of $\mathcal{F}(M)$ on $\Hom(N, M^{\infty})$ and $\mathcal G(N,M)$.  In this section we specialize to the case $N$ is separable and $M = X^\omega$ for some McDuff factor $X$, then construct a particularly nice action of $\mathcal{F}(X^\omega) = \mathbb{R}_+$ on $\Hom(N, M^{\infty})$. 

Let $X$ be a McDuff $II_1$-factor and fix a *isomorphism $\Phi:R\bar\otimes X\to X$.
Denote by $\Phi_\omega:(R\bar\otimes X)^\omega\to X^\omega$ the component-wise *isomorphism induced by $\Phi$. Since II$_1$-factors always have a unique trace, we use $\tau$ to denote them all. 

\begin{definition}
Let $p\in X^\omega$ be a projection such that $\Phi_\omega^{-1}(p)$ has the form $\tilde p\otimes 1=(\tilde p_n\otimes 1)_n\in(R\otimes X)^\omega$, with $\tau(\tilde p_n)=\tau(\tilde p)=\tau(p)$. A standard isomorphism $\theta:X^\omega\to pX^\omega p$ is any isomorphism gotten in the following way. Fix isomorphisms $\alpha_n:R\to\tilde p_nR\tilde p_n$ and let $\theta_n:=\alpha_n\otimes Id:R\bar\otimes X\to \tilde p_nR\tilde p_n\bar\otimes X$. Define $\theta$ to be the isomorphism on the right hand side of the following diagram
$$
\xymatrix{\ell^\infty(R\bar\otimes X)\ar[r]\ar[d]^{\oplus_{\mathbb N}\theta_n} & (R\bar\otimes X)^\omega\ar[r]\ar[d]^{_\omega\theta} & X^\omega\ar[d]^{\theta}\\
\ell^\infty((\tilde p_n\otimes1)(R\bar\otimes X)(\tilde p_n\otimes1))\ar[r] & (\tilde p\otimes1)(R\bar\otimes X)^\omega(\tilde p\otimes1)\ar[r] & pX^\omega p}
$$
where the horizontal left-hand side arrows are the projections onto the quotient, the horizontal right-hand side arrows are the ultrapower isomorphisms $\Phi_\omega$, and the isomorphism $_\omega\theta$ is the one obtained by imposing commutativity on the left-half of the diagram.
\end{definition}

Since a McDuff $II_1$-factor has full fundamental group, for all $t\in(0,1),$ there is a standard isomorphism $\theta_t:X^\omega\to p_tX^\omega p_t$, where $p_t\in X^\omega$ is a projection of trace $t$ such that $\Phi_\omega^{-1}(p_t)$ has the form $\tilde p_t\otimes1\in (R\bar\otimes X)^\omega$.\\

The following Lemma is very similar to Proposition 3.1.2 in \cite{Br} and it is one of the main technical tools that we need.

\begin{lemma}\label{lem:equivalence}
Let $p,q\in X^\omega$ be projections of the same trace as needed to define standard isomorphisms $\theta_p,\theta_q$. For all separable von Neumann subalgebras $M_1\subseteq X^\omega$, there is a partial isometry $v_1\in X^\omega$ such that $v_1^*v_1=p$, $v_1v_1^*=q$ and
$$
v_1\theta_p(x)v_1^*=\theta_q(x)\qquad\qquad\text{ for all } x\in M_1
$$
\end{lemma}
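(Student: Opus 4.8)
The plan is to reduce the assertion to a statement about the building blocks $\alpha_n\colon R\to\tilde p_nR\tilde p_n$ and $\beta_n\colon R\to\tilde q_nR\tilde q_n$ underlying $\theta_p$ and $\theta_q$, and then invoke an Effros–Haagerup / intertwining style argument inside the ultraproduct. First I would unravel the definition of a standard isomorphism: after applying $\Phi_\omega^{-1}$ we may assume $p=\tilde p\otimes 1$ and $q=\tilde q\otimes 1$ with $\tau(\tilde p_n)=\tau(\tilde q_n)$ for all $n$, and $\theta_p$, $\theta_q$ are the ultrapower maps induced by $\theta_n^p=\alpha_n\otimes\mathrm{id}$ and $\theta_n^q=\beta_n\otimes\mathrm{id}$ on $R\bar\otimes X$. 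Since $\tilde p_n$ and $\tilde q_n$ are projections of equal trace in the $II_1$-factor $R$, for each $n$ there is a partial isometry $w_n\in R$ with $w_n^*w_n=\tilde p_n$, $w_nw_n^*=\tilde q_n$; moreover $\mathrm{Ad}(w_n)\circ\alpha_n$ and $\beta_n$ are two isomorphisms $R\to\tilde q_nR\tilde q_n\cong R$ differing by an automorphism $\gamma_n:=\beta_n^{-1}\circ\mathrm{Ad}(w_n)\circ\alpha_n$ of $R$. The point is that we do not need these $w_n$ to "match up" on the nose — we get to correct by a sequence of unitaries, using that $M_1$ is separable.

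The key technical step is the following. Fix a countable $\|\cdot\|_2$-dense subset $\{x_k\}$ of $M_1$ and lift each $x_k$ to a representative $(x_{k,n})_n\in\ell^\infty(X)$. Working inside $\ell^\infty(R\bar\otimes X)$ rather than $X^\omega$, consider the two sequences $(\theta_n^p(1\otimes x_{k,n}))_n$ and $(\theta_n^q(1\otimes x_{k,n}))_n$; what I want is a sequence of unitaries $u_n\in \tilde q_nR\tilde q_n\bar\otimes X$ (equivalently, after conjugating, a partial isometry lift) with
$$
\lim_{n\to\omega}\big\|\,u_n\,(w_n\otimes 1)\,\theta_n^p(1\otimes x_{k,n})\,(w_n^*\otimes 1)\,u_n^*-\theta_n^q(1\otimes x_{k,n})\,\big\|_2=0
$$
for every $k$. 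Conjugating the first map by $w_n\otimes 1$ replaces $\alpha_n$ by $\mathrm{Ad}(w_n)\circ\alpha_n$, so up to the automorphism $\gamma_n$ of $R$ the two systems are $(\gamma_n\otimes\mathrm{id})(1\otimes x_{k,n})=1\otimes x_{k,n}$ and $1\otimes x_{k,n}$ — they are literally equal. Hence $v_{1,n}:=(w_n\otimes 1)$ already intertwines after conjugating $\theta_p$ into $\tilde q_nR\tilde q_n\bar\otimes X$: $v_{1,n}\,\theta_n^p(1\otimes x)\,v_{1,n}^*=(\mathrm{Ad}(w_n)\alpha_n(1))\otimes x$, and since $\theta_n^q(1\otimes x)=\beta_n(1)\otimes x=\tilde q_n\otimes x$, the difference is $(\mathrm{Ad}(w_n)(\tilde p_n)-\tilde q_n)\otimes x=0$. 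Passing through $\Phi_\omega$, the image $v_1$ of $(v_{1,n})_n=(\Phi_\omega$-pushforward of $w_n\otimes 1)$ is a partial isometry in $X^\omega$ with $v_1^*v_1=\Phi_\omega(\tilde p\otimes1)=p$, $v_1v_1^*=q$, and $v_1\theta_p(x)v_1^*=\theta_q(x)$ for $x$ in the countable dense set $\{x_k\}$. A routine $\|\cdot\|_2$-continuity argument — $\theta_p$, $\theta_q$ are $\tau$-isometric and $\mathrm{Ad}(v_1)$ is $\|\cdot\|_2$-contractive on the corner — extends this from $\{x_k\}$ to all of $M_1$.

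The main obstacle I anticipate is bookkeeping rather than conceptual: one must be careful that the partial isometries $w_n$ genuinely assemble to an element of the ultraproduct with the right cut-down projections (this is where $\tau(\tilde p_n)=\tau(\tilde q_n)$ for every $n$, not just in the limit, is used), and that all the diagrams defining the standard isomorphisms commute so that conjugating "before" and "after" applying $\Phi_\omega$ give the same answer. A secondary subtlety is that, strictly, the hypothesis only gives $\theta_p,\theta_q$ as \emph{some} standard isomorphisms, so the automorphisms $\gamma_n$ of $R$ are not trivial; but since they act on the $R$-tensor-leg only and the elements of $M_1$ sit in the $X$-leg (after the identification coming from $\Phi$ being part of the data), they disappear. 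If one wanted to allow $M_1$ to meet the $R$-leg nontrivially one would instead need the uniqueness of the trace on $R$ together with an approximate-intertwining argument à la Proposition 3.1.2 of \cite{Br}; I would mention this but not carry it out, since the statement as phrased does not require it.
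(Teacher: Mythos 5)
There is a genuine gap, and it sits exactly where you labeled the difficulty ``secondary.'' Your central computation treats only those elements of $\Phi_\omega^{-1}(M_1)$ that admit lifts of the special form $(1\otimes x_{k,n})_n$, i.e.\ elements living in the $X$-leg of $(R\bar\otimes X)^\omega$. But $M_1$ is an \emph{arbitrary} separable von Neumann subalgebra of $X^\omega$, and the fixed identification $\Phi_\omega$ does not place $\Phi_\omega^{-1}(M_1)$ inside anything like $(1\otimes X)^\omega$: a lift of $\Phi_\omega^{-1}(x)$ is just some bounded sequence $(y_{k,n})_n$ of general elements of $R\bar\otimes X$, with nontrivial $R$-components. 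This generality is not optional --- in the paper's application (Proposition \ref{prop:independent}) the lemma is applied to corners of $\phi(N)$ for an arbitrary embedding $\phi$ of a separable II$_1$-factor, which have no reason to avoid the $R$-leg. Consequently the step where you say the two conjugated systems ``are literally equal'' fails: on a general lift, the discrepancy between $\Ad(w_n\otimes 1)\circ(\alpha_n\otimes\mathrm{id})$ and $\beta_n\otimes\mathrm{id}$ is the automorphism $\gamma_n\otimes\mathrm{id}$ with $\gamma_n=\beta_n^{-1}\circ\Ad(w_n)\circ\alpha_n\in\Aut(R)$ acting nontrivially, so $(w_n\otimes 1)_n$ alone does not intertwine, and your construction proves the lemma only for those $M_1$ whose $\Phi_\omega$-preimage lies in the $X$-leg.

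Removing this discrepancy is the actual content of the lemma, and it is exactly what the paper's proof supplies: after transporting $M_1$ through $\Phi_\omega^{-1}$ and observing that ${}_\omega\theta_q\circ({}_\omega\theta_p)^{-1}$ acts only on the hyperfinite leg, it invokes Proposition 3.1.2 of \cite{Br}, an approximate-intertwining/reindexing argument. Concretely, since each $\gamma_n$ is an automorphism of the hyperfinite factor it is approximately inner, so for a countable $\|\cdot\|_2$-dense subset of $M_1$ with fixed lifts one chooses unitaries $u_n\in\tilde q_nR\tilde q_n$ implementing $\gamma_n$ up to errors tending to $0$ along $\omega$ on the relevant finite sets of $R$-components, and the image of $(u_nw_n\otimes1)_n$ in the ultraproduct is the exact intertwiner; separability of $M_1$ and a $\|\cdot\|_2$-continuity argument finish the job. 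So the ``approximate-intertwining argument \`a la Proposition 3.1.2 of \cite{Br}'' that you explicitly decline to carry out is not a refinement needed only for a stronger statement --- it is the proof of the statement as written.
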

\begin{proof}
With the obvious notation, consider the following commutative diagram

$$
\xymatrix{\ell^\infty((\tilde p_n\otimes1)(R\bar\otimes X)(\tilde p_n\otimes1)))\ar[r]\ar[d]^{\oplus\theta_{p_n}^{-1}} & (\tilde p\otimes1)(R\bar\otimes X)^{\omega}(\tilde p\otimes1)\ar[r]\ar[d]^{(_\omega\theta_p)^{-1}} & pX^\omega p\ar[d]^{\theta_p^{-1}}\\
\ell^\infty(R\bar\otimes X)\ar[r]\ar[d]^{\oplus_{\mathbb N}\theta_{q_n}} & (R\bar\otimes X)^\omega\ar[r]\ar[d]^{_\omega\theta_q} & X^\omega\ar[d]^{\theta_q}\\
\ell^\infty((\tilde q_n\otimes1)(R\bar\otimes X)(\tilde q_n\otimes1))\ar[r] & (\tilde q\otimes1)(R\bar\otimes X)^\omega(\tilde q\otimes1)\ar[r] & qX^\omega q}
$$
Consider $\Phi_\omega^{-1}(M_1)\subseteq(R\bar\otimes X)^\omega$. In the left-half of the previous diagram, we may apply Proposition 3.1.2 in \cite{Br} to $\Theta= _\omega\theta_q\circ(_\omega\theta_p)^{-1}$ and $M=\Phi_\omega^{-1}(M_1)$, since all isomorphisms act only on the hyperfinite $II_1$-factor $R$. Thus, there is a partial isometry $v\in(R\bar\otimes X)^\omega$ such that $v^*v=\tilde p\otimes1$, $vv^*=\tilde q\otimes1$ and
\begin{align}\label{eq:equivalence}
v(_\omega\theta_p(x))v^*=_\omega\theta_q(x)\qquad\qquad\text{for all } x\in\Phi_\omega^{-1}(M_1)
\end{align}
Define $v_1=\Phi_\omega(v)$ and one can verify that it works.
\end{proof}

Let $t\in(0,1)$ and let $p_t\in X^\omega$ be a projection of trace $t$ as needed to define a standard isomorphism $\theta_t:X^\omega\to p_tX^\omega p_t$. Let us recall the construction of a trace-scaling automorphism $\Theta_t$ of $B(H)\bar\otimes X^\omega$, since it will be helpful in the proof of Proposition \ref{prop:independent}. More details can be found in \cite{Ka-Ri2}, Proposition 13.1.10.

Let $\{e_{jj}\}\subseteq B(H)$ be a countable family of orthogonal one-dimensional projections such that $\sum e_{jj}=1$ and let $e_{jk}$ be partial isometries mapping $e_{jj}$ to $e_{kk}$. Define $f_{jk}=e_{jk}\otimes1\in B(H)\bar\otimes X^\omega$.  We know that $f_{11}(B(H)\bar\otimes X^\omega)f_{11}$ is *isomorphic to $X^\omega$ and that $\tau_{\infty}$ is normalized in such a way that $\tau_{\infty}(f_{11})=1$. Thus we can look at $p_t$ as a projection in $f_{11}(B(H)\bar\otimes X^\omega)f_{11}$ with trace $t$ and, for simplicity, let us denote it by $g_{11}$. Let $g_{jj}$ be a countable family of orthogonal projections, each of which is equivalent to $g_{11}$, such that $\sum g_{jj}=1\in B(H)\bar\otimes X^\omega$ and extend the family $\{g_{jj}\}$ to a system of matrix units $\{g_{jk}\}$ of $B(H)\bar\otimes X^\omega$ adding appropriate partial isometries. Now, for any algebra $A \subset B(K)$, denote by $\aleph_0\otimes A$ the algebra of countably infinite matrices with entries in $A$ that define bounded operators on $\oplus_\N K \cong H \otimes K$.  The isomorphism $\theta_t:X^\omega\to p_tX^\omega p_t$ can be seen as an isomorphism $\theta_t:f_{11}(B(H)\bar\otimes X^\omega)f_{11}\to p_{t}(B(H)\bar\otimes X^\omega)p_t$ and then it gives rise to an isomorphism
$$
\aleph_0\otimes\theta_t:\aleph_0\otimes (f_{11}(B(H)\bar\otimes X^\omega)f_{11})\to\aleph_0\otimes(p_{t}(B(H)\bar\otimes X^\omega)p_t)
$$
Now, let $G$ be the matrix in $\aleph_0\otimes (f_{11}(B(H)\bar\otimes X^\omega)f_{11})$ having the unit in the position $(1,1)$ and zeros elsewhere. Then $(\aleph_0\otimes\theta_t)(G)$ is the matrix in $\aleph_0\otimes (p_t(B(H)\bar\otimes X^\omega)p_t)$ having the unit in the position $(1,1)$ and zeros elsewhere. Now, take isomorphisms
$$
\phi_1:B(H)\bar\otimes X^\omega\to\aleph_0\otimes (f_{11}(B(H)\bar\otimes X^\omega)f_{11})\qquad\phi_2:B(H)\bar\otimes X^\omega\to\aleph_0\otimes (p_t(B(H)\bar\otimes X^\omega)p_t)
$$
such that $\phi_1(f_{11})=G$ and $\phi_2(g_{11})=(\aleph\otimes\theta_t)(G)$. Define
$$
\Theta_t=\phi_2^{-1}\circ(\aleph_0\otimes\theta_t)\circ\phi_1
$$
Is is easily checked that $\tau_\infty(\Theta_t(x))=t\tau_\infty(x)$, for all $x$.

\begin{remark}\label{rem:matrixrepr}
{\rm For the sequel, it is important to stress the fact that $\Theta_t$ is nothing but the isomorphism obtained by writing $B(H)\bar\otimes X^\omega$ as an algebra of countably infinite matrices and letting $\theta_t$ act on each component. Therefore, if we want to prove that two isomorphisms $\Theta_t^{(1)}$ and $\Theta_t^{(2)}$ constructed in such a fashion are unitarily equivalent, it suffices to find unitaries mapping $\theta_t^{(1)}$ to $\theta_t^{(2)}$ and the matrix units used in the first representation of $B(H)\bar\otimes X^\omega$ as a matrix algebra to the matrix units used in the second representation.}
\end{remark}

\begin{definition}\label{def:scalarmultiplication}
\begin{color}{red}Let $t\in(0,1]$ and $[\phi]\in\mathbb Hom(N,(X^\omega)^\infty)$. \end{color}We define
$$
t[\phi]=[\Theta_t\circ\phi]
$$
\end{definition}

Remark \ref{rem:matrixrepr} is important because now we need to prove that the definition of $t[\phi]$ depends only on $t$ and $[\phi]$ and is independent of $\Theta_t$.

\begin{proposition}\label{prop:independent}
Let $t\in(0,1]$, $p_t^{(i)}\in X^\omega$, $i=1,2$, be two projections
of trace $t$ and $\theta_t^{(i)}:X^\omega\rightarrow
p_t^{(i)}X^\omega p_t^{(i)}$ be two standard isomorphisms. Then $\Theta_t^{(1)}\circ\phi$ is unitarily
equivalent to $\Theta_t^{(2)}\circ\phi$.
\end{proposition}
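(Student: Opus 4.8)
The plan is to reduce the statement to the two ingredients highlighted in Remark \ref{rem:matrixrepr}: a unitary intertwining the two standard isomorphisms $\theta_t^{(1)}$ and $\theta_t^{(2)}$ on the relevant separable subalgebra, and the freedom to match up the two systems of matrix units used to write $B(H)\bar\otimes X^\omega$ as a countably-infinite matrix algebra. Since $\phi(N)$ is separable, we only need the intertwining on $\phi(N)$ (or on any fixed separable subalgebra containing it), which is precisely the conclusion of Lemma \ref{lem:equivalence}.

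First I would set $M_1$ to be the separable von Neumann subalgebra of $X^\omega$ generated by $\phi(N)$ (identifying $f_{11}(B(H)\bar\otimes X^\omega)f_{11}$ with $X^\omega$ as in the construction). Applying Lemma \ref{lem:equivalence} to the projections $p_t^{(1)},p_t^{(2)}$ and to $M_1$, we obtain a partial isometry $v\in X^\omega$ with $v^*v=p_t^{(1)}$, $vv^*=p_t^{(2)}$, and $v\,\theta_t^{(1)}(x)\,v^* = \theta_t^{(2)}(x)$ for all $x\in M_1$, in particular for all $x\in\phi(N)$. The next step is to promote this partial isometry in $X^\omega$ to a unitary in $B(H)\bar\otimes X^\omega$ that conjugates $\Theta_t^{(1)}\circ\phi$ to $\Theta_t^{(2)}\circ\phi$. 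Here is where I invoke the matrix-units bookkeeping: both $\Theta_t^{(i)}$ are, by construction, obtained by choosing a system of matrix units $\{g_{jk}^{(i)}\}$ with $g_{11}^{(i)}=p_t^{(i)}$ (viewed inside $f_{11}(B(H)\bar\otimes X^\omega)f_{11}$) and letting $\theta_t^{(i)}$ act entrywise. So I would build the desired unitary $U$ by prescribing it on the first row/column of the $g^{(1)}$-matrix picture via $v$ and then extending: concretely, since $g_{11}^{(1)}=p_t^{(1)}\sim p_t^{(2)}=g_{11}^{(2)}$ in $X^\omega$ (they have the same trace and $X^\omega$ is a factor), one can choose the families $\{g_{jj}^{(i)}\}$ compatibly and define $U = \sum_j g_{j1}^{(2)}\, v\, g_{1j}^{(1)}$ (a $\sigma$-weakly convergent sum of partial isometries with orthogonal ranges and orthogonal sources), which is a unitary in $B(H)\bar\otimes X^\omega$ with $U g_{jk}^{(1)} U^* = g_{jk}^{(2)}$ and $U g_{11}^{(1)} y\, g_{11}^{(1)} U^* = v y v^*$ for $y\in f_{11}(B(H)\bar\otimes X^\omega)f_{11}$. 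A direct check, using that $\Theta_t^{(i)}$ acts as $\theta_t^{(i)}$ on each matrix entry and that $v$ intertwines $\theta_t^{(1)}$ and $\theta_t^{(2)}$ on $\phi(N)$, then gives $\Theta_t^{(2)}(\phi(x)) = U\,\Theta_t^{(1)}(\phi(x))\,U^*$ for all $x\in N$, i.e.\ $[\Theta_t^{(1)}\circ\phi]=[\Theta_t^{(2)}\circ\phi]$.

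I expect the main obstacle to be the bookkeeping in the previous paragraph, namely verifying carefully that conjugation by $U$ really does carry the entrywise action of $\theta_t^{(1)}$ to the entrywise action of $\theta_t^{(2)}$ — one has to be attentive about the isomorphisms $\phi_1,\phi_2$ and $\aleph_0\otimes\theta_t$ appearing in the definition of $\Theta_t$, and confirm that the unitary built from $v$ and the two matrix-unit systems is exactly the one that simultaneously matches the matrix units and the "diagonal block" isomorphisms, as Remark \ref{rem:matrixrepr} asserts suffices. A secondary point requiring a line of justification is that the $\theta_t^{(i)}$, defined a priori only as isomorphisms $X^\omega\to p_t^{(i)}X^\omega p_t^{(i)}$, are being applied to elements of $\phi(N)\subseteq X^\omega$ under the identification $X^\omega\cong f_{11}(B(H)\bar\otimes X^\omega)f_{11}$, so that Lemma \ref{lem:equivalence} applies verbatim; this is routine but worth stating explicitly. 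Once these are in hand, the conclusion is immediate.
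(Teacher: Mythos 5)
Your overall strategy is the same as the paper's: use Lemma \ref{lem:equivalence} to produce a partial isometry $v$ intertwining $\theta_t^{(1)}$ and $\theta_t^{(2)}$ on a suitable separable subalgebra, and then invoke Remark \ref{rem:matrixrepr} by exhibiting a unitary of $B(H)\bar\otimes X^\omega$ that simultaneously matches the two systems of matrix units and implements the intertwining of the standard isomorphisms. The packaging differs slightly: the paper first conjugates $\phi$ so that $\phi(N)\subseteq M_n(\mathbb C)\otimes X^\omega$, applies Lemma \ref{lem:equivalence} in each diagonal corner $M_j=(e_{jj}\otimes1)\phi(N)(e_{jj}\otimes1)$, glues the resulting unitaries into one unitary $u$, and then matches the matrix units with a second unitary $w$; you apply the lemma once and build the single unitary $U=\sum_j g_{j1}^{(2)}\,v\,g_{1j}^{(1)}$, which at the same time carries $\{g^{(1)}_{jk}\}$ to $\{g^{(2)}_{jk}\}$ and implements $v$ on the corner. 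That construction and the entrywise computation are sound, and using one and the same $v$ in every row and column is precisely what makes the off-diagonal entries come out right; in that respect your route is, if anything, a cleaner version of the paper's.

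The one genuine flaw is the choice of $M_1$. You take $M_1$ to be the separable subalgebra of $X^\omega$ ``generated by $\phi(N)$,'' identifying $X^\omega$ with $f_{11}(B(H)\bar\otimes X^\omega)f_{11}$; but $\phi(N)$ need not lie in that corner, since $\phi(1)$ is merely a finite-trace projection and $\tau_\infty(\phi(1))$ can exceed $1$ (this happens already for sums of classes, which is exactly the situation needed later for Proposition \ref{prop:distributive}). Moreover, because $\Theta_t^{(i)}$ acts entry by entry, the intertwining you need from $v$ is not on $\phi(N)$ as a whole but on the compressed matrix entries $f_{1j}\,\phi(x)\,f_{k1}\in f_{11}(B(H)\bar\otimes X^\omega)f_{11}$ for $x\in N$ and $j,k$; your verification sketch quietly conflates these two. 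The repair is immediate: let $M_1$ be the von Neumann subalgebra of $X^\omega$ generated by all these entries (separable, since $N$ is separable and there are only countably many pairs $(j,k)$; alternatively, first conjugate $\phi(N)$ into $M_n(\mathbb C)\otimes X^\omega$ as the paper does, so only finitely many entries occur), and apply Lemma \ref{lem:equivalence} to that $M_1$. With this correction, your $U$ does satisfy $U\,\Theta_t^{(1)}(\phi(x))\,U^*=\Theta_t^{(2)}(\phi(x))$ for all $x\in N$, and no ``compatible choice'' of the families $\{g^{(i)}_{jj}\}$ is needed: the formula for $U$ works for whatever matrix-unit systems were fixed in the construction of the $\Theta_t^{(i)}$.
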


\begin{proof}
Let us start with an observation. The image $\phi(N)$ a priori belongs to $B(H)\bar\otimes X^\omega$, but since $\tau_\infty(\phi(1))<\infty$, we can twist it by a unitary and suppose that $\phi(N)\subseteq M_n(\mathbb C)\otimes X^\omega$, for some $n>\tau_\infty(\phi(1))$. Now, for all $j=1,\ldots,n$, let
\begin{color}{red}
$$
M_j=(e_{jj}\otimes1)\phi(N)(e_{jj}\otimes1)\subseteq(e_{jj}\otimes1)(B(H)\bar\otimes X^\omega)(e_{jj}\otimes1)\cong X^\omega
$$
\end{color}
Since $p_t^{(1)}$ is equivalent to $p_t^{(2)}$ and $(p_t^{(1)})^\perp$ is equivalent to $(p_t^{(2)})^\perp$, in Lemma \ref{lem:equivalence} we may find a unitary $u_i\in X^\omega$ such that
$$
(e_{jj}\otimes u_j)((e_{jj}\otimes\theta_t^{(1)})(x))(e_{jj}\otimes u_j)=(e_{jj}\otimes\theta_t^{(2)})(x)\qquad\text{ for all } x\in M_j
$$
where $e_{jj}\otimes\theta_t^{(1)}$ stands for the endomorphism obtained letting $\theta_t^{(1)}$ act only on $f_{jj}(B(H)\bar\otimes X^\omega)f_{jj}$.
Since the partial isometries $e_{jj}\otimes u_j$ act on orthogonal subspaces, we may extend them \emph{all together} to a unitary $u\in B(H)\bar\otimes X^\omega$ such that
\begin{color}{red}
$$
u((e_{jj}\otimes\theta_t^{(1)})(x))u^*=(e_{jj}\otimes\theta_t^{(2)})(x)\qquad\text{ for all } j=1,\ldots,n \text{ and for all } x\in M_j
$$
\end{color}
Set $e_n=\sum_{j=1}^ne_{jj}$. We have
$$
u((e_n\otimes\theta_t^{(1)})(x))u^*=(e_n\otimes\theta_t^{(2)})(x)\qquad\text{for all } x\in(e_n\otimes1)\phi(N)(e_n\otimes1)=\phi(N)
$$
Now observe that the matrix units $\left\{f_{jk}^{(1)}\right\}$ and $\left\{f_{jk}^{(2)}\right\}$ used to construct $\Theta_t^{(1)}$ and $\Theta_t^{(2)}$ are unitarily equivalent, since the projections on the diagonal have the same trace. Therefore, also the matrix units $\left\{uf_{jk}^{(1)}u^*\right\}$ and $\left\{f_{jk}^{(2)}\right\}$ are unitarily equivalent. Let $w\in B(H)\bar\otimes X^\omega$ be a unitary such that
$$
w(uf_{jk}^{(1)}u^*)w^*=f_{jk}^{(2)}\qquad\qquad\text{ for all } j,k\in\mathbb N
$$
The unitary $w$ then twists the matrix units $uf_{jk}^{(1)}u^*$ into the matrix units $f_{jk}^{(2)}$ and it twists $u((e_n\otimes\theta_t^{(1)})(x))u^*$ to $(e_n\otimes\theta_t^{(2)})(x)$, for all $x\in\phi(N)$. Therefore, by Remark \ref{rem:matrixrepr},
$$
wu\Theta_t^{(1)}(x)u^*w^*=\Theta_t^{(2)}(x)\qquad\qquad\text{ for all } x\in\phi(N)
$$
as required.
\end{proof}

Recall that we have already fixed a *isomorphism $\Phi:R\bar\otimes X\to X$ and we have denoted by $\Phi_\omega:(R\bar\otimes X)^\omega\to X^\omega$ the induced component-wise *isomorphism.

\begin{definition}\label{def:expansion}
Let $\phi:N\to(R\bar\otimes X)^\omega$. For each $x\in N$, let $(X_i^\phi)\in\ell^\infty(R\bar\otimes X)$ be a lift of $\phi(x)$. Define $1\otimes\phi$ through the following diagram
$$
\xymatrix{(1\otimes X_n^\phi)_n\in\ell^\infty(R\bar\otimes R\bar\otimes X)\ar[r]\ar[d]^{\oplus_{\mathbb N}(1\otimes\Phi)} & (R\bar\otimes R\bar\otimes X)^\omega\ar[d]^{(1\otimes\Phi)_\omega}\\
\ell^\infty(R\bar\otimes X)\ar[r] & (R\bar\otimes X)^\omega}
$$
i.e. $(1\otimes\phi)(x)$ is the image of the element $(1\otimes X_n^\phi)_n\in\ell^\infty(R\bar\otimes R\bar\otimes X)$ down in $(R\bar\otimes X)^\omega$.
\end{definition}

Exactly as in Lemma 3.2.3 in \cite{Br}, we get the following 

\begin{lemma}\label{lem:expansion}
For all $\phi:N\to(R\bar\otimes X)^\omega$, one has $[1\otimes\phi]=[\phi]$.
\end{lemma}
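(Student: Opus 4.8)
The plan is to follow the proof of Lemma 3.2.3 of \cite{Br}, with Lemma \ref{lem:equivalence} playing the role of \cite[Prop.~3.1.2]{Br}. Since $\phi$ and $1\otimes\phi$ both take values in the II$_1$-factor $(R\bar\otimes X)^\omega$ and $\phi(1)$, $(1\otimes\phi)(1)$ have the same trace, it suffices to exhibit a unitary $u$ in $B(H)\bar\otimes(R\bar\otimes X)^\omega$ (equivalently, one may take $u\in(R\bar\otimes X)^\omega$) with $u\,(1\otimes\phi)(x)\,u^*=\phi(x)$ for all $x\in N$.

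First I would unwind Definition \ref{def:expansion}: chasing the diagram and applying $\Phi_\omega$ componentwise, one checks that $\Phi_\omega\circ(1\otimes\phi)=\beta_\omega\circ(\Phi_\omega\circ\phi)$, where $\beta\colon X\to X$ is the unital injective endomorphism $\beta(y)=\Phi(1_R\otimes y)$ and $\beta_\omega$ denotes the componentwise endomorphism of $X^\omega$ it induces. Because $\beta(X)'\cap X=\Phi(R\otimes 1)\cong R$ and $\beta(X)\vee\bigl(\beta(X)'\cap X\bigr)=X$, the map $\beta$ is precisely the inclusion of $X$ as the first tensor factor of a McDuff splitting $X\cong X\bar\otimes R$. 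Writing $\psi:=\Phi_\omega\circ\phi\colon N\to X^\omega$ and using that $\Phi_\omega$ is an isomorphism, the claim reduces to showing that $\beta_\omega\circ\psi$ is unitarily equivalent to $\psi$.

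Next I would establish the core of the argument: $\beta_\omega$ acts innerly on the separable subalgebra $\psi(N)\subseteq X^\omega$, i.e.\ it is implemented there by a unitary of $X^\omega$. This is the point at which the McDuff hypothesis on $X$ is indispensable, and the mechanism is exactly that of Lemma \ref{lem:equivalence} (via \cite[Prop.~3.1.2]{Br}): $\beta$ differs from $\id_X$ only through its action on the auxiliary hyperfinite factor sitting inside $R\bar\otimes X$, and $\psi(N)'\cap X^\omega$ contains a copy of $R$ by McDuff-ness of $X$ together with separability of $\psi(N)$; hence the two embeddings $\beta_\omega|_{\psi(N)}$ and $\id|_{\psi(N)}$ of $\psi(N)$ into $X^\omega$ are intertwined by a unitary of $X^\omega$, built from the approximate innerness of automorphisms of $R$. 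Conjugating this unitary back through $\Phi_\omega$ yields the unitary $u$ of the first step, and $[1\otimes\phi]=[\phi]$ follows.

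The step I expect to be the genuine obstacle is the last one: one has to pin down precisely the sense in which the failure of $\beta_\omega$ to be inner is concentrated on an auxiliary copy of $R$, so that Lemma \ref{lem:equivalence} --- and the mechanism behind \cite[Prop.~3.1.2]{Br} --- really does apply to the separable subalgebra $\psi(N)$. Everything else (identifying $1\otimes\phi$ with $\beta_\omega\circ\psi$, the transport of the implementing unitary through $\Phi_\omega$, and the bookkeeping with traces of $\phi(1)$) is routine and mirrors computations already carried out in this section.
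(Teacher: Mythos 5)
Your reduction is correct and is surely the right first move: chasing Definition \ref{def:expansion} gives $\Phi_\omega\circ(1\otimes\phi)=\beta_\omega\circ(\Phi_\omega\circ\phi)$ with $\beta=\Phi(1_R\otimes\,\cdot\,)\colon X\to X$, and the lemma amounts to implementing $\beta_\omega$ on the separable algebra $\psi(N)$ by a unitary of $X^\omega$. The gap is exactly at the step you flag as the obstacle, and the tools you invoke do not close it. Lemma \ref{lem:equivalence} (and \cite[Prop.~3.1.2]{Br} behind it) only intertwines two \emph{standard} isomorphisms, i.e.\ maps induced componentwise by isomorphisms of the form $\alpha_n\otimes\mathrm{id}_X$ that act on the $R$-tensor factor alone; $\beta_\omega$ is induced by the non-surjective embedding $y\mapsto 1_R\otimes y$ composed with the \emph{arbitrary} fixed isomorphism $\Phi$, and it is simply not true that ``$\beta$ differs from $\mathrm{id}_X$ only through its action on an auxiliary copy of $R$'': changing $\Phi$ changes $\beta$ by an arbitrary automorphism of $X$. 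Likewise, knowing $\psi(N)'\cap X^\omega$ contains a copy of $R$ does not by itself produce an intertwiner between $\psi$ and $\beta_\omega\circ\psi$ (plenty of inequivalent embeddings have large relative commutants).

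What the argument actually needs is that $\beta$ is approximately inner on $X$ in the point-$\|\cdot\|_2$ sense (then a routine diagonal argument in the ultrapower gives the unitary). Indeed, if $X$ is separable, taking $N=X$ and $\phi=\Phi_\omega^{-1}\circ(\text{constant embedding})$ shows the lemma is \emph{equivalent} to this statement, so no ultrapower trick can avoid it. In Brown's original setting $X=R$ this is automatic: every unital endomorphism of the hyperfinite factor is approximately inner (approximate a finite set by a finite-dimensional subalgebra and use uniqueness of the trace), which is why ``exactly as in Lemma 3.2.3 of \cite{Br}'' suffices there. For a general McDuff $X$ it is a genuine McDuff/Elliott-intertwining input tied to the choice of $\Phi$: one has $\beta_\Phi$ approximately inner if and only if $\Phi$ differs from a ``good'' McDuff isomorphism by an approximately inner automorphism, and if $\mathrm{Aut}(X)\neq\overline{\mathrm{Inn}}(X)$ (e.g.\ $X=R\bar\otimes Y$ with $Y$ a property~(T) factor admitting an outer automorphism) a bad choice of $\Phi$ makes the conclusion fail for the constant embedding. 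So a complete proof must either choose $\Phi$ so that $\Phi(1_R\otimes\,\cdot\,)$ is approximately unitarily equivalent to $\mathrm{id}_X$ (the von Neumann analogue of the strongly self-absorbing intertwining) and say so, or prove that property for the $\Phi$ fixed at the start of the section; your proposal supplies neither, and approximate innerness of automorphisms of $R$ alone cannot.
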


\begin{lemma}\label{lem:compositionisomorphisms}
Let $\theta_s,\theta_t$ be two standard isomorphisms. Then
$$
\theta_s\circ\theta_t:X^\omega\to\theta_s(p_t)X^\omega\theta_s(p_t)
$$
is still a standard isomorphism.
\end{lemma}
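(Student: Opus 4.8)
The plan is to unwind the defining diagram of a standard isomorphism and recognise $\theta_s\circ\theta_t$, after conjugating by $\Phi_\omega$, as the ultrapower of the componentwise composites of the building-block isomorphisms — which are again of exactly the form required by the definition. First I would fix the data: write $\Phi_\omega^{-1}(p_t)=(\tilde p_{t,n}\otimes 1)_n$ and $\Phi_\omega^{-1}(p_s)=(\tilde p_{s,n}\otimes 1)_n$ (so $\tau(\tilde p_{t,n})=t$, $\tau(\tilde p_{s,n})=s$), and let $\alpha_n^{(t)}\colon R\to\tilde p_{t,n}R\tilde p_{t,n}$ and $\alpha_n^{(s)}\colon R\to\tilde p_{s,n}R\tilde p_{s,n}$ be the isomorphisms used to build $\theta_t$ and $\theta_s$. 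Thus $\theta_t=\Phi_\omega\circ{}_\omega\theta_t\circ\Phi_\omega^{-1}$, where ${}_\omega\theta_t$ is induced componentwise by $\theta_{t,n}=\alpha_n^{(t)}\otimes\Id$, and likewise $\theta_s=\Phi_\omega\circ{}_\omega\theta_s\circ\Phi_\omega^{-1}$ with ${}_\omega\theta_s$ induced by $\theta_{s,n}=\alpha_n^{(s)}\otimes\Id$.

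The key computation comes next. Since $\theta_t(X^\omega)=p_tX^\omega p_t\subseteq X^\omega$, the composite is defined, and the inner occurrence of $\Phi_\omega^{-1}\circ\Phi_\omega$ cancels, so
$$
\theta_s\circ\theta_t=\Phi_\omega\circ\bigl({}_\omega\theta_s\circ{}_\omega\theta_t\bigr)\circ\Phi_\omega^{-1}.
$$
Componentwise one has $\theta_{s,n}\circ\theta_{t,n}=(\alpha_n^{(s)}\circ\alpha_n^{(t)})\otimes\Id$. I would then set $\beta_n:=\alpha_n^{(s)}\circ\alpha_n^{(t)}$ and $\tilde q_n:=\alpha_n^{(s)}(\tilde p_{t,n})$, noting that $\tilde q_n$ is a projection in $\tilde p_{s,n}R\tilde p_{s,n}\subseteq R$ with $\tau(\tilde q_n)=s\,\tau(\tilde p_{t,n})=st$ and that $\beta_n\colon R\to\tilde q_nR\tilde q_n$ is an isomorphism; hence ${}_\omega\theta_s\circ{}_\omega\theta_t$ is exactly the map induced componentwise by $\beta_n\otimes\Id$. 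To identify the target projection, I would observe that, with $q:=\theta_s(p_t)$,
$$
\Phi_\omega^{-1}(q)={}_\omega\theta_s\bigl(\Phi_\omega^{-1}(p_t)\bigr)={}_\omega\theta_s\bigl((\tilde p_{t,n}\otimes1)_n\bigr)=(\tilde q_n\otimes1)_n=\tilde q\otimes1,
$$
where $\tilde q=(\tilde q_n)_n\in R^\omega$, and $\tau(\tilde q_n)=\tau(\tilde q)=\tau(q)=st$.

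Finally I would conclude by direct comparison with the definition of a standard isomorphism: the data $\bigl(q,\ (\tilde q_n)_n,\ (\beta_n)_n\bigr)$ satisfies precisely the hypotheses, and the standard isomorphism $X^\omega\to qX^\omega q$ it produces is $\Phi_\omega\circ({}_\omega\theta_s\circ{}_\omega\theta_t)\circ\Phi_\omega^{-1}=\theta_s\circ\theta_t$. The main obstacle — really the only nonroutine point — is the bookkeeping showing that the two-layer diagram obtained by stacking the defining diagrams of $\theta_t$ and $\theta_s$ is itself a legitimate instance of the standard-isomorphism diagram: one must check that the componentwise maps $\beta_n\otimes\Id$ descend to the ultrapower to give ${}_\omega\theta_s\circ{}_\omega\theta_t$ (immediate, since each factor does) and that conjugation by $\Phi_\omega$ matches up the left and right halves of the stacked picture. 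Everything else is the trace identity $\tau(\tilde q_n)=st$ together with the elementary identity $\theta_{s,n}\circ\theta_{t,n}=(\alpha_n^{(s)}\circ\alpha_n^{(t)})\otimes\Id$.
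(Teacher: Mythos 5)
Your proof is correct; the paper in fact states this lemma without any proof, and your verification is precisely the routine check the authors leave implicit: conjugating by $\Phi_\omega$ so that $\theta_s\circ\theta_t=\Phi_\omega\circ({}_\omega\theta_s\circ{}_\omega\theta_t)\circ\Phi_\omega^{-1}$, identifying the componentwise maps as $\beta_n\otimes\Id$ with $\beta_n=\alpha_n^{(s)}\circ\alpha_n^{(t)}\colon R\to\tilde q_nR\tilde q_n$, $\tilde q_n=\alpha_n^{(s)}(\tilde p_{t,n})$, and noting $\Phi_\omega^{-1}(\theta_s(p_t))=(\tilde q_n\otimes1)_n$, so the data $(\theta_s(p_t),(\tilde q_n)_n,(\beta_n)_n)$ fits the definition exactly. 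The one point worth spelling out is the trace bookkeeping: $\alpha_n^{(s)}$ carries the normalized trace of $R$ to the normalized trace of the corner $\tilde p_{s,n}R\tilde p_{s,n}$, hence scales the ambient trace of $R$ by $s$, giving $\tau(\tilde q_n)=s\,\tau(\tilde p_{t,n})=st=\tau(\theta_s(p_t))$ for every $n$, as the definition requires.
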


\begin{proposition}\label{prop:distributive}
For all $s,t>0$ and $[\phi],[\psi]\in\mathbb Hom(N, (X^\omega)^\infty)$, the following properties are satisfied:
\begin{enumerate}
\item $0[\phi]=0$,
\item $1[\phi]=[\phi]$,
\item $s(t[\phi])=(st)[\phi]$,
\item $s([\phi]+[\psi])=s[\phi]+s[\psi]$,
\item if $s+t\leq1$, then $(s+t)[\phi]=s[\phi]+t[\phi]$.
\end{enumerate}
\end{proposition}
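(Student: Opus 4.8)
The plan is to exploit throughout that each $\Theta_t$ is an \emph{automorphism} of $B(H)\bar{\otimes}X^\omega$, hence an injective $*$-homomorphism preserving orthogonality of projections, and that $t[\phi]$ depends only on $t$ and the \emph{class} $[\phi]$: if $\phi'=\mathrm{Ad}\,u\circ\phi$ then $\Theta_t\circ\phi'=\mathrm{Ad}(\Theta_t(u))\circ(\Theta_t\circ\phi)$, while independence of the particular $\Theta_t$ is Proposition \ref{prop:independent}. Items (1), (2), (4) then fall out quickly. For (1) one simply sets $0[\phi]:=0$, which is anyway forced since the ``$\Theta_0$'' would have range a projection of trace $0$. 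For (2), by Proposition \ref{prop:independent} we may compute $1[\phi]$ with the trivial standard isomorphism: take $\tilde p=1\in R$, so $p_1=1_{X^\omega}$, and $\alpha_n=\mathrm{id}_R$, giving $\theta_1=\mathrm{id}_{X^\omega}$; tracing the construction of $\Theta_1$ with $g_{11}=f_{11}$ and $\phi_2=\phi_1$ yields $\Theta_1=\mathrm{id}$, so $1[\phi]=[\phi]$. For (4), with $[\phi]+[\psi]=[\tilde\phi+\psi]$ as in Definition \ref{def:sum} (so $\tilde\phi(1)\perp\psi(1)$), injectivity of $\Theta_s$ gives $\Theta_s\circ(\tilde\phi+\psi)=\Theta_s\tilde\phi+\Theta_s\psi$ with $\Theta_s(\tilde\phi(1))\perp\Theta_s(\psi(1))$, whence $s([\phi]+[\psi])=[\Theta_s\tilde\phi]+[\Theta_s\psi]=s[\phi]+s[\psi]$.

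For (3), observe that $\Theta_s\circ\Theta_t$ scales $\tau_\infty$ by $st$. One checks that $\Phi_\omega^{-1}(\theta_s(p_t))$ again has the form $\tilde q\otimes 1$ with $\tau(\tilde q_n)=st$, so by Lemma \ref{lem:compositionisomorphisms} the corner isomorphism $\theta_s\circ\theta_t$ is standard; then, by the matrix-by-matrix description of Remark \ref{rem:matrixrepr}, after conjugating the two matrix-unit systems and the two corner isomorphisms into agreement, $\Theta_s\circ\Theta_t$ is unitarily equivalent on $\phi(N)$ to a trace-scaling automorphism built from $\theta_s\circ\theta_t$. Proposition \ref{prop:independent} then gives $s(t[\phi])=[\Theta_s\Theta_t\phi]=[\Theta_{st}\phi]=(st)[\phi]$.

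Item (5) is the one with real content, and the only place where $s+t\le 1$ is used essentially. First twist $\phi$ so that $\phi(1)\le f_{11}$. Since $R$ (hence $X^\omega$) has projections of every trace and $s+t\le 1$, choose \emph{orthogonal} projections $p_s,p_t\le f_{11}$ of traces $s$ and $t$, both of the form needed to define a standard isomorphism, with $p_{s+t}:=p_s+p_t$ (trace $s+t$) of that form as well. Build $\Theta_s,\Theta_t,\Theta_{s+t}$ from corresponding standard isomorphisms $\theta_s,\theta_t,\theta_{s+t}$, choosing the auxiliary matrix-unit data compatibly (the same $\{f_{jk}\}$, and $g_{11}=p_r$ in the $r$-th construction), so that $\Theta_r(\phi(1))\le p_r$ for each $r\in\{s,t,s+t\}$. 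Then $\Theta_s(\phi(1))\perp\Theta_t(\phi(1))$, so $\Theta_s\phi+\Theta_t\phi$ is a legitimate representative of $s[\phi]+t[\phi]$, and both it and $\Theta_{s+t}\phi$ take values in $p_{s+t}X^\omega p_{s+t}$ with support of trace $(s+t)\tau(\phi(1))$. So (5) reduces to the single assertion $\Theta_s\phi+\Theta_t\phi\sim\Theta_{s+t}\phi$.

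This last step is where I expect the difficulty. The idea is to first replace $\phi$ by $1\otimes\phi$ (Lemma \ref{lem:expansion}), choosing $\tilde p_s,\tilde p_t$ inside the copy of $R$ that commutes with $(1\otimes\phi)(N)$; then $\Theta_s\phi+\Theta_t\phi$ and $\Theta_{s+t}\phi$ both differ from $\phi$ only through isomorphisms acting in the ``$R$-direction'' transverse to $\phi(N)$---the point being that re-inflation along $p_{s+t}=p_s\oplus p_t$ should split as re-inflation along $p_s$ together with re-inflation along $p_t$ once the image of $\phi$ is transverse---so a Popa-style intertwining argument (Proposition 3.1.2 of \cite{Br}, in the form already used in Lemma \ref{lem:equivalence}) should produce a partial isometry conjugating one map to the other. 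The main obstacle is making this transversality precise: isolating the correct separable subalgebra to feed to Popa's result, arranging the matrix-unit data of the three $\Theta$'s compatibly, and verifying the splitting of the re-inflation. As a fallback, one may instead prove only the case $s+t=1$ in this way and then deduce the general statement formally: apply the $s+t=1$ identity to the pair $\tfrac{s}{s+t},\tfrac{t}{s+t}$ and expand $(s+t)\left(\tfrac{s}{s+t}[\phi]+\tfrac{t}{s+t}[\phi]\right)$ using (4) and then (3).
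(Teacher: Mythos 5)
Your treatment of (1)--(4) is fine and matches the paper's (the paper dismisses these as trivial/direct, with (3) coming from Lemma \ref{lem:compositionisomorphisms} plus Proposition \ref{prop:independent}, exactly as you say). For (5) you have correctly located both the real content and the key device --- replace $\phi$ by $1\otimes\phi$ via Lemma \ref{lem:expansion} and choose the projections inside the copy of $R$ transverse to the image --- but you stop exactly where the proof has to be finished: you say a ``Popa-style intertwining argument \emph{should} produce a partial isometry'' and you list as open the tasks of isolating the separable subalgebra, aligning the matrix-unit data, and verifying the splitting. That is a genuine gap, and moreover the intertwining detour is not the right continuation. The point of passing to $1\otimes\phi$ is that no intertwining is needed at all: after twisting $\phi$ into $M_n(\mathbb C)\otimes X^\omega$ and using the uniqueness (up to unitaries) of the unital embedding $M_n(\mathbb C)\hookrightarrow R$ to view $\phi(N)\subseteq(R\bar\otimes X)^\omega$, one replaces $[\phi]$ by $[1\otimes\phi]$, whose image lies in $1\otimes(R\bar\otimes X)^\omega\subseteq(R\bar\otimes R\bar\otimes X)^\omega$. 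Choosing the projections $p_s\otimes1\otimes1$, $p_t\otimes1\otimes1$ and $(p_s+p_t)\otimes1\otimes1$, the associated standard isomorphisms have the form $\alpha\otimes\mathrm{id}$ with $\alpha$ acting only on the first tensor factor, so on the image of $1\otimes\phi$ they satisfy $\theta_r(1\otimes y)=\tilde p_r\otimes y$; since $\tilde p_s+\tilde p_t=\tilde p_{s+t}$, one gets the \emph{exact equality} of homomorphisms
$$
\Theta_s\circ(1\otimes\phi)+\Theta_t\circ(1\otimes\phi)=\Theta_{s+t}\circ(1\otimes\phi),
$$
and then Proposition \ref{prop:independent} (where the intertwining machinery of Lemma \ref{lem:equivalence} has already been spent) says the classes do not depend on these special choices, giving $s[\phi]+t[\phi]=(s+t)[\phi]$. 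So the missing step in your write-up is precisely this on-the-nose additivity computation; once you see it, the separable-subalgebra and matrix-unit worries you flag evaporate. Your fallback reduction to the case $s+t=1$ is formally correct but does not help, since the $s+t=1$ case contains the same unproved step.
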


\begin{proof}
The first two properties are trivial. The third property follows by Lemma \ref{lem:compositionisomorphisms} and Proposition \ref{prop:independent}. The fourth property can be easily proved by direct computation. Let us prove the fifth property. Fix $n>(s+t)\tau_\infty(\phi(1))$ and twist $\phi$ by a unitary in such a way that $\phi(N)\subseteq M_n(\mathbb C)\otimes X^\omega=(M_n(\mathbb C)\otimes X)^\omega$, since $M_n(\mathbb C)$ is finite dimensional. Now, $M_n(\mathbb C)$ has a unique unital embedding into $R$ up to unitary equivalence and therefore we may suppose that $\phi(N)\subseteq(R\bar\otimes X)^\omega$ and we may apply the construction in Definition \ref{def:expansion} and Lemma \ref{lem:expansion} to replace $[\phi]$ with $[1\otimes\phi]$. Now we have the freedom to choose orthogonal projections of the form
$$
p_s\otimes1\otimes1, \quad p_t\otimes1\otimes1,\quad (p_s+p_t)\otimes1\otimes1\in(R\bar\otimes R\bar\otimes X)^\omega
$$
and use these projections to define standard isomorphisms. It is then clear that
$$
\Theta_s\circ(1\otimes\phi)+\Theta_t\circ(1\otimes\phi)=\Theta_{t+s}\circ(1\otimes\phi)
$$
which implies that $[\Theta_s\circ\phi]+[\Theta_t\circ\phi]=[\Theta_{s+t}\circ\phi]$; i.e. $s[\phi]+t[\phi]=(s+t)[\phi]$.
\end{proof}

We show in an appendix that the five algebraic conditions above imply $\mathcal{G}(N, X^\omega)$ inherits a natural vector space structure.  Furthermore, the metric on $\Hom(N, (X^\omega)^\infty)$ extends to a norm on $\mathcal{G}(N, X^\omega)$ and even makes it a Banach space (see \cite{Ca-Fr} for details).  In summary: 

\begin{theorem} If $N$ is separable and $X$ is McDuff, then $\mathcal{G}(N, X^\omega)$ has a Banach space structure with canonical actions of $Out(N)$ and $Out(X^\omega \bar{\otimes} B(H))$. 
\end{theorem}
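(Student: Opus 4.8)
The plan is to assemble the statement from three ingredients already in hand: the algebraic relations of Proposition~\ref{prop:distributive} (promoted to a vector space structure in the appendix), the metric description of the topology in Remark~\ref{rem:topologytometric}, and the general ``convex-like space embeds in a Banach space'' theorem of \cite{Ca-Fr}; the $Out$-actions then come almost for free from Proposition~\ref{quotient}. For the vector space structure, I would first extend the partial scalar action $t\mapsto t[\phi]$, so far defined only for $t\in(0,1]$, to all of $\mathbb{R}_{\ge 0}$: for $s>0$ choose $n\in\mathbb{N}$ with $s/n\le 1$ and put $s[\phi]:=(s/n)[\phi]+\cdots+(s/n)[\phi]$ ($n$ summands), independence of $n$ being a consequence of parts (3) and (5) of Proposition~\ref{prop:distributive} together with cancellation (Proposition~\ref{cancellation}); one checks relations (1)--(5) persist for all positive scalars. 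On $\mathcal{G}(N,X^\omega)$ one then sets $(-s)(a-b):=sb-sa$ and $0\cdot v:=0$, well-definedness on Grothendieck classes again being forced by cancellation. The verification of the vector space axioms is now formal, reducing to (1)--(5) and the abelian group laws, and this is exactly the computation relegated to the appendix, which I would simply invoke. The outcome is that $\mathcal{G}(N,X^\omega)$ is an $\mathbb{R}$-vector space containing $\Hom(N,(X^\omega)^\infty)$ as the convex cone $\{[\phi]\}$, with the convex combinations $\lambda[\phi]+(1-\lambda)[\psi]$ ($0\le\lambda\le1$) agreeing with the convex-like structure of \cite{Br}.

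\textbf{Norm and completeness.} Since $N$ is separable it is countably generated by contractions, and since $M=X^\omega$ is an ultraproduct, Remark~\ref{rem:topologytometric} supplies an honest, translation-invariant metric $d$ on $\Hom(N,(X^\omega)^\infty)$ inducing its topology; by \cite{Br} this metric is complete. The pair consisting of this complete metric and the convex-like structure of the previous step is precisely the sort of object handled in \cite{Ca-Fr}, which realizes it isometrically as a closed convex subset of a Banach space, the ambient Banach space being the closed linear span of that cone. By cancellation and the universal property of the Grothendieck construction, the linear span of the cone is $\mathcal{G}(N,X^\omega)$ itself; so, following \cite{Ca-Fr}, one obtains a norm $\|\cdot\|$ on $\mathcal{G}(N,X^\omega)$ whose induced metric restricts to $d$ on $\Hom(N,(X^\omega)^\infty)$, which generates the quotient topology of Proposition~\ref{quotient}, and under which $\mathcal{G}(N,X^\omega)$ is complete.

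\textbf{The $Out$-actions.} By Proposition~\ref{quotient}, $Out(N)$ and $Out(X^\omega\bar\otimes B(H))$ already act on $\mathcal{G}(N,X^\omega)$ by continuous group automorphisms. A continuous group automorphism is in particular a continuous $\mathbb{Q}$-linear map, and on the metrizable topological $\mathbb{R}$-vector space $\mathcal{G}(N,X^\omega)$ every continuous $\mathbb{Q}$-linear map is automatically $\mathbb{R}$-linear (approximate real scalars by rationals and use continuity of the map and of scalar multiplication); hence each group element acts by a bounded linear automorphism, which is the asserted canonical action. Alternatively one checks $\mathbb{R}$-linearity directly, e.g. $\alpha.(t[\phi])=[\Theta_t\circ\phi\circ\alpha^{-1}]=t(\alpha.[\phi])$, and similarly for $Out(X^\omega\bar\otimes B(H))$, using that composing $\Theta_t$ with an automorphism on either side yields unitarily equivalent maps after precomposition with $\phi$.

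\textbf{Where the difficulty lies.} The genuine work is in the norm-and-completeness step, and it is there that the hypotheses ``$N$ separable, $X$ McDuff'' are used essentially rather than cosmetically: McDuffness of $X$ is what makes $\mathcal{F}(X^\omega)=\mathbb{R}_+$ act, and hence makes the scalar multiplication of the first step available at all, while the ultraproduct hypothesis is what turns the pseudometric of Remark~\ref{rem:topologytometric} into an honest, complete metric. Even granting all this, $d$ is not affinely homogeneous — it behaves like an $\ell^2$-convex metric, so $\|v\|:=d(v,0)$ is \emph{not} a norm, and completeness of $\mathcal{G}(N,X^\omega)$ does \emph{not} follow formally from completeness of the cone $\Hom(N,(X^\omega)^\infty)$, since a Cauchy sequence in $\mathcal{G}$ need not admit representatives $[\phi_n]-[\psi_n]$ with controlled ``numerators''. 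Reorganizing the situation into a genuine Banach norm, and in particular establishing this completeness, is exactly the content of \cite{Ca-Fr}, which I would cite rather than reprove.
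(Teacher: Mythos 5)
Your proposal is correct and follows essentially the same route as the paper: the vector space structure is assembled from Proposition \ref{prop:distributive} together with the appendix (your divide-by-$n$ extension of the scalar action to $\mathbb{R}_{\ge 0}$ is a cosmetic variant of the appendix's floor/fractional-part argument in Proposition \ref{prop:positivescalarmultiplication}), the norm and completeness are delegated to \cite{Ca-Fr} exactly as the paper does, and the actions of $Out(N)$ and $Out(X^\omega\bar\otimes B(H))$ are those already provided by Proposition \ref{quotient}. One caution on material beyond the statement: the theorem only asserts the existence of these canonical actions, and your parenthetical claim that $\beta\circ\Theta_t\circ\phi$ is unitarily equivalent to $\Theta_t\circ\beta\circ\phi$ for an arbitrary $\beta$ is not justified (nor needed), since $\beta\circ\Theta_t$ need not come from a standard isomorphism, so Proposition \ref{prop:independent} does not apply to it.
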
 

Since the embedding $X^{\omega} \hookrightarrow (X^\omega)^\infty, x \mapsto (1 \otimes e_{11}) (x \otimes 1)  (1 \otimes e_{11})$ gives rise to an embedding $\Hom (N, X^\omega) \hookrightarrow \Hom(N, (X^\omega)^\infty)$ which is evidently compatible with the ``convex-like" structure introduced in \cite{Br}, we have a new and more concrete proof of the vector-space embedding that motivated \cite{Ca-Fr}. 

\begin{corollary} If $N \subset \R$ is a nonamenable separable subfactor, then the non-second-countable, complete metric space $\Hom(N,\R)$ is affinely and isometrically isomorphic to a closed convex subset of a Banach space. 
\end{corollary}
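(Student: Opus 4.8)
The plan is to specialize the Theorem stated just above to $X=R$ — which is legitimate since $R$ is McDuff, $R\cong R\bar\otimes R$ — and then to exhibit $\Hom(N,\R)$ as a closed convex subset of the Banach space $\mathcal{G}(N,\R)$. Concretely, I would study the composite
$$
J\colon\ \Hom(N,\R)\ \xrightarrow{\ \iota\ }\ \Hom(N,(\R)^\infty)\ \hookrightarrow\ \mathcal{G}(N,\R),
$$
where $\iota$ is induced by the corner embedding $x\mapsto(1\otimes e_{11})(x\otimes1)(1\otimes e_{11})$ recalled just before the statement, and the second arrow is the Grothendieck embedding of Section~2. By the preceding Theorem, $\mathcal{G}(N,\R)$ is a Banach space whose norm restricts, on the image of $\Hom(N,(\R)^\infty)$, to the $\ell^2$ metric $d$ of Remark~\ref{rem:topologytometric}; so it will suffice to show that $J$ is an injective affine isometry onto a closed convex set, and then to recall why the domain is non-second-countable.

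First I would dispose of the soft points. The map $\iota$ is plainly well defined, and it is injective: if a unitary $u\in\R\bar\otimes B(H)$ intertwines $\iota\pi_1$ and $\iota\pi_2$, then evaluating at $1\in N$ shows that $u$ commutes with $1\otimes e_{11}$, so its compression to the corner $(1\otimes e_{11})(\R\bar\otimes B(H))(1\otimes e_{11})\cong\R$ is a unitary $v$ with $\operatorname{Ad}v\circ\pi_1=\pi_2$. It is affine: by construction a standard isomorphism $\theta_t$, hence $\Theta_t$ (cf.\ Remark~\ref{rem:matrixrepr}), restricts on the corner to the ``compress to a projection of trace $t$'' operation underlying the convex-like combinations of \cite{Br}, so together with Definitions~\ref{def:sum} and~\ref{def:scalarmultiplication} one sees that $J$ carries the combination $t[\pi]+(1-t)[\sigma]$ formed in the convex-like structure of \cite{Br} to $t\,J[\pi]+(1-t)\,J[\sigma]$ in $\mathcal{G}(N,\R)$. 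This is precisely what it means for $\iota$ to be ``compatible with the convex-like structure,'' and both verifications are routine bookkeeping.

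The one genuinely analytic point — and the expected main obstacle — is that $\iota$ is \emph{isometric}: the metric $d$ pulls back under $\iota$ to the metric of \cite{Br} on $\Hom(N,\R)$. Computing both with the same generating contractions $\{a_n\}\subset N$ and using the normalization $\tau_\infty(1\otimes e_{11})=1$, the inequality $d(\iota[\pi],\iota[\sigma])\le d_{\mathrm{Br}}([\pi],[\sigma])$ is immediate, since any $v\in U(\R)$ extends to the unitary $(v\otimes e_{11})+(1\otimes(1-e_{11}))$ of $\R\bar\otimes B(H)$, whose conjugation has the same effect on corner-valued homomorphisms. For the reverse inequality one must show that an arbitrary conjugating unitary $u$ of the ambient type~II$_\infty$ factor can be replaced — at the cost of an arbitrarily small increase of the relevant finite $\ell^2$ sum — by one supported in the corner: discard the tail of the series; note that only $u$ with $u(1\otimes e_{11})u^*$ close in $\|\cdot\|_2$ to $1\otimes e_{11}$ are competitive; then compress $u$ to an approximate unitary of the corner and correct it within $\R$. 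This is the same ultraproduct argument that powers Lemma~\ref{lem:equivalence} and \cite[Proposition~3.1.2]{Br}, and it is exactly why \cite[Theorem~3.1]{Sh} and \cite[Proposition~3.1]{Br2} are invoked in Remark~\ref{rem:topologytometric}; I would model the estimate on those.

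Granting the isometry, the rest assembles immediately. Since the norm of $\mathcal{G}(N,\R)$ extends $d$, $J$ is an affine isometry onto its image. The image is convex: by affineness and Proposition~\ref{prop:distributive}, $t\,J[\pi]+(1-t)\,J[\sigma]$ is the image of the corresponding \cite{Br}-convex combination, which again lies in $\Hom(N,\R)$. The image is closed: $\Hom(N,\R)$ is complete by \cite{Br} and $J$ is isometric, so $J(\Hom(N,\R))$ is a complete — hence closed — subset of the Banach space. Finally, when $N$ is nonamenable (equivalently, non-hyperfinite) the domain is non-second-countable; this is recorded in \cite{Br}, and it also follows from the remark after Proposition~\ref{quotient} that $\mathcal{G}(N,\R)$ is nonseparable for such $N$. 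Thus $J$ realizes $\Hom(N,\R)$ as a closed convex subset of a Banach space, affinely and isometrically, with the isometry step being the only part that needs real work.
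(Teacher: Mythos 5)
Your proposal follows essentially the same route as the paper: specialize the Theorem to $X=R$, compose the corner embedding $\Hom(N,\R)\hookrightarrow\Hom(N,(\R)^\infty)$ with the Grothendieck embedding into the Banach space $\mathcal{G}(N,\R)$, use compatibility with the convex-like structure for affineness and convexity of the image, completeness of $\Hom(N,\R)$ for closedness, and \cite{Br} for non-second-countability. In fact the paper says less than you do --- it simply calls the corner embedding ``evidently compatible'' with the convex-like structure and delegates the norm/isometry to the \cite{Ca-Fr}-style construction --- so your extra care on the isometry of $\iota$ (the reverse inequality, obtained by compressing an ambient unitary to the corner and correcting it inside $\R$) is detail the paper leaves implicit rather than a different approach.
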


\section*{Appendix} 

Here we establish a purely algebraic result which is surely known to algebraists, though we're unaware of a reference.  Namely, we consider conditions that imply the Grothendieck group of an abelian monoid is a vector space.

Assume we have a commutative and cancelative monoid $G_+$ equipped with an action $[0,1]\curvearrowright G_+$ satisfying the following properties. For all $g,g_1,g_2\in G_+$ and for all $s,t\in[0,1]$,

\begin{enumerate}
\item $0.g=0$,
\item $1.g=g$,
\item if $s+t\leq1$, then $(s+t).g=s.g+t.g$,
\item $s.(g_1+g_2)=s.g_1+s.g_2$,
\item $(st).g=s.(t.g)$.
\end{enumerate}

Let $t>0$, denote by $f_t$ the floor of $t$, that is the largest integer smaller than or equal to $t$, and denote by $d_t=t-f_t$ the decimal part of $t$. Having an action $[0,1]\curvearrowright G_+$, we can easily define an action $\mathbb R_+\curvearrowright G_+$ by setting
$$
t.g=f_t.g+d_t.g
$$
where $f_t.g$ is just the $f_t$-fold sum $g+\ldots+g$.

\begin{proposition}\label{prop:positivescalarmultiplication}
The action $\mathbb R_+\curvearrowright G_+$ satisfies the same five properties as above (minus the restriction in (3) that $s+t\leq1$, of course).
\end{proposition}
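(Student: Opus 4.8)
The plan is to verify the five properties for the extended action $\mathbb R_+ \curvearrowright G_+$ one at a time, reducing each to the corresponding property of the original $[0,1]$-action together with the monoid axioms. Properties (1) and (2) are immediate from the definition $t.g = f_t.g + d_t.g$, since for $t \in \{0,1\}$ one has $d_t = 0$ and $f_t \in \{0,1\}$. For property (4), distributivity over sums, one writes $t.(g_1+g_2) = f_t.(g_1+g_2) + d_t.(g_1+g_2)$; the term $f_t.(g_1+g_2)$ equals $f_t.g_1 + f_t.g_2$ by commutativity and associativity of $+$ (a trivial induction on the integer $f_t$), and $d_t.(g_1+g_2) = d_t.g_1 + d_t.g_2$ by property (4) of the original action. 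Rearranging via commutativity gives $t.g_1 + t.g_2$.

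The substantive points are (3) and (5). For (3), given $s,t \ge 0$, I would first establish the key lemma that the original property (3) \emph{iterates}, i.e. for any finitely many $r_1,\dots,r_k \in [0,1]$ with $\sum r_i \le 1$ one has $(\sum r_i).g = \sum (r_i.g)$; this follows by induction using associativity of $+$ and the hypothesis $(r+r').g = r.g + r'.g$ when $r+r'\le 1$. Now write $s+t = f_{s+t} + d_{s+t}$ and also $s = f_s + d_s$, $t = f_t + d_t$. The integer part of $s+t$ is $f_s + f_t$ or $f_s + f_t + 1$ according to whether $d_s + d_t < 1$ or $\ge 1$. In the first case $d_{s+t} = d_s + d_t \le 1$, so by the iterated property (3) applied to the two decimals (and to the trivial decomposition of a sum of $0$'s and $1$'s into unit pieces, using property (2)) we get $(s+t).g = (f_s+f_t).g + (d_s+d_t).g = f_s.g + f_t.g + d_s.g + d_t.g$, which rearranges to $s.g + t.g$. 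In the second case $d_s + d_t = 1 + d_{s+t}$, and I split $d_s + d_t$ as (something $\le 1$) plus $d_{s+t}$, absorbing the extra full unit $1.g = g$ into the integer part $f_s + f_t + 1$; again the iterated (3) and property (2) close the computation. The bookkeeping between floors and decimals is the main obstacle — it is elementary but must be done carefully for the two cases.

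For (5), multiplicativity $(st).g = s.(t.g)$, I would argue by reducing to the case $s,t \in [0,1]$, where it is hypothesis (5). Using property (3) (now available in full generality from the previous step) one writes $s = f_s + d_s$ as a sum of $f_s$ copies of $1$ and one copy of $d_s$, so $s.(t.g) = f_s.(t.g) + d_s.(t.g)$ by (3), where $f_s.(t.g)$ is the $f_s$-fold sum. On the other side, $st = f_s \cdot t + d_s \cdot t$ (ordinary arithmetic), and by (3) again $(st).g = (f_s t).g + (d_s t).g$; moreover $(f_s t).g = f_s.(t.g)$ by a short induction (it is the statement $(nt).g = n.(t.g)$ for integers $n$, proved from (3) and induction). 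So the problem collapses to showing $(d_s t).g = d_s.(t.g)$ with $d_s \in [0,1]$, and repeating the same decomposition on $t = f_t + d_t$ reduces that in turn to $(d_s d_t).g = d_s.(d_t.g)$, which is exactly hypothesis (5). Again the only real work is the integer-scaling bookkeeping, which I expect to be the minor obstacle here, entirely parallel to the treatment of (3).
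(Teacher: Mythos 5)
Your plan is correct, and its overall architecture is the same as the paper's: properties (1), (2), (4) are immediate, and (3), (5) are reduced to the $[0,1]$-hypotheses via the decomposition $t=f_t+d_t$. The genuine difference is in how you prove additivity (3). The paper expands all integer multiples as repeated sums of $g$ and runs an iterative rewriting procedure: it repeatedly splits one copy of $g$ using the restricted property (3), cancels matching terms on the two sides (invoking cancellativity), and argues the process must terminate in $0=0$ because the total coefficients agree. You instead give a direct two-case verification according to whether $d_s+d_t<1$ or $d_s+d_t\ge 1$; this is cleaner, since it needs neither cancellation nor any termination argument, only two applications of the restricted (3). One caution on your second case: as phrased, ``split $d_s+d_t$ as (something $\le 1$) plus $d_{s+t}$'' is not literally legitimate, because $d_s+d_t>1$ and the restricted hypothesis only applies to summands in $[0,1]$ with sum at most $1$ (and $d_s.g+d_t.g=(d_s+d_t).g$ is an instance of what you are proving). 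The correct execution of your idea is to split $d_t.g=(1-d_s).g+d_{s+t}.g$ (valid since $(1-d_s)+d_{s+t}=d_t\le 1$) and then recombine $d_s.g+(1-d_s).g=1.g=g$, which absorbs the extra unit into the integer part exactly as you intend. For (5) your argument is essentially the paper's: you peel off $f_s$ using the now-global (3) together with $(nt).g=n.(t.g)$, then peel off $f_t$ using (4), leaving precisely the hypothesis $(d_sd_t).g=d_s.(d_t.g)$; the paper does the same in one step by expanding $s.(t.g)$ into $(f_sf_t).g+(f_sd_t).g+(d_sf_t).g+(d_sd_t).g$ and re-summing the coefficients with the extended (3).
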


\begin{proof}
The first two properties are trivial as well as the fourth one. Let us prove the third property. We have to prove that
\begin{align}\label{eq:firstversion}
f_{s+t}.g+d_{s+t}.g=f_s.g+d_s.g+f_t.g+d_t.g
\end{align}
expanding the terms of the form $n.g$, with $n\in\mathbb N$, the previous equality can be rewritten as follows
\begin{align}\label{eq:secondversion}
g+\ldots+g+d_{s+t}.g=g+\ldots+g+d_s.g+g+\ldots+g+d_t.g
\end{align}
Observe that the sum is commutative and therefore the $g$'s with coefficient 1 can be put wherever we want. This will be important to apply the following argument. Suppose that $d_{s+t}>d_t$. Take the $g$ closest to $d_t$ and rewrite it as
$$
g=(1-(d_{s+t}-d_t)+d_{s+t}-d_t).g
$$
Using the third property above we can rewrite Equation (\ref{eq:secondversion}) as follows
\begin{align}\label{eq:thirdversion}
g+\ldots+g+d_{s+t}.g=g+\ldots+g+d_s.g+g+\ldots+g+(1-d_{s+t}+d_t).g+d_{s+t}.g
\end{align}
Since the monoid is cancelative, the last terms cancel out. It is clear that we can iterate this procedure and, since the sum of the coefficients of the $g$'s on the left-hand side is equal to the sum of the coefficients of the $g$'s on the right-hand side, we end in an identity $0=0$. This means that the starting equality in Equation (\ref{eq:firstversion}) holds, as desired.\\
Now we prove the fifth property. Observe that it is true if one between $s$ and $t$ belongs to $\mathbb N$, by definition. Using this observation and using the third and the fourth property, we have
\begin{align*}
s.(t.g)=\\
&=s.(f_t.g+d_t.g)=\\
&=f_s.(f_t.g+d_t.g)+d_s.(f_t.+d_t.g)=\\
&=(f_sf_t).g+(f_sd_t).g+(d_s.f_t).g+(d_sd_t).g=\\
&=(f_sf_t+f_sd_t+d_s.f_t+d_sd_t).g=\\
&=(f_sf_t+f_st-f_sf_t+sf_t-f_sf_t+st-f_st-sf_t+f_sf_t).g=\\
&=(st).g
\end{align*}
\end{proof}

Let us recall the definition of the Grothendieck group of an abelian monoid. Given an abelian monoid $G_+$, its Grothendieck group is the abelian group constructed as follows:
\begin{itemize}
\item Consider in $G_+\times G_+$ the equivalence relation
$$
(g_1,g_2)\sim(h_1,h_2)\,\,\,\,\,\,\,\,iff\,\,\,\,\,\,\,\,g_1+h_2=h_1+g_2
$$
\item Let $G=(G_+\times G_+)/\sim$ equipped with the component-wise
operation, that is well-defined on the equivalent classes.
\end{itemize}
$G$ is an abelian group and, in general, $G_+$ does not embed into $G$. If $G_+$ is a cancelative monoid, then $G_+$ embeds into $G$.

Notice that by definition, the class
$[(g_1,g_2)]$ represents the element $g_1-g_2$ and the inverse of $[(g_1,g_2)]$ is $[(g_2,g_1)]$.

\begin{proposition}\label{lem:vectorspace}
Let $G_+$ be an abelian, cancelative monoid equipped with an action
$\mathbb R_+\curvearrowright G_+$ such that for all
$s,t\in\mathbb R_+$ and $g,g_1,g_2\in G_+$
\begin{enumerate}
\item \begin{color}{red}$0g=0$\end{color}
\item $1g=g$
\item $s(tg)=(st)(g)$
\item $t(g_1+g_2)=tg_1+tg_2$
\item $(s+t)g=sg+tg$
\end{enumerate}
Then the Grothendieck group of $G_+$ is a vector space with
scalar multiplication $s[(g_1,g_2)]=[(sg_1,sg_2)]$, when $s\geq0$
and $s[(g_1,g_2)]=[((-s)g_2,(-s)g_1)]$, when $s<0$.
\end{proposition}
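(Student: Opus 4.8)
The plan is to show first that the two formulas in the statement really define a map $\mathbb{R}\times G\to G$, where $G$ is the Grothendieck group of $G_+$, and then to verify the vector space axioms one at a time. Since $G$ is already known to be an abelian group, the content lies entirely in the compatibility of the $\mathbb{R}$-action with addition and composition, and the only genuinely delicate point will be distributivity over a sum of two scalars of opposite sign.

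For well-definedness, suppose $(g_1,g_2)\sim(h_1,h_2)$, i.e.\ $g_1+h_2=h_1+g_2$. For $s\ge 0$, apply $s\cdot(-)$ and property (4) to get $sg_1+sh_2=sh_1+sg_2$, so $(sg_1,sg_2)\sim(sh_1,sh_2)$; for $s<0$ the same computation with the two coordinates swapped shows $((-s)g_2,(-s)g_1)\sim((-s)h_2,(-s)h_1)$. Note the two cases are consistent at $s=0$ (only the first applies, yielding $[(0,0)]$, the neutral element of $G$), and that $(-1)\cdot[(g_1,g_2)]=[(g_2,g_1)]$ is precisely the additive inverse of $[(g_1,g_2)]$, so the sign convention agrees with the group law. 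The identity $1\cdot x=x$ is immediate from property (2). For $s(x+y)=sx+sy$, writing $x=[(g_1,g_2)]$, $y=[(h_1,h_2)]$ with $s\ge 0$, property (4) gives $s(x+y)=[(s(g_1+h_1),s(g_2+h_2))]=[(sg_1+sh_1,sg_2+sh_2)]=sx+sy$, and the case $s<0$ is identical after swapping coordinates. For $(st)x=s(tx)$ one splits into the four sign combinations of $(s,t)$ and, using $st=(-s)(-t)$ and the analogous identities, reduces each case directly to property (3).

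The main obstacle is the scalar additivity $(s+t)x=sx+tx$. When $s$ and $t$ have the same sign this follows at once from property (5) (with a coordinate swap when both are negative). The remaining case, say $s\ge 0>t$ (the case $t\ge 0>s$ being symmetric), is the crux: with $x=[(g_1,g_2)]$ one has $sx+tx=[(sg_1+(-t)g_2,\,sg_2+(-t)g_1)]$, and this must be shown equal to $(s+t)x$, whose defining formula depends on the sign of $s+t$. In the subcase $s+t\ge 0$ the required Grothendieck relation is
$$
(s+t)g_1+\big(sg_2+(-t)g_1\big)=\big(sg_1+(-t)g_2\big)+(s+t)g_2,
$$
and both sides collapse to $sg_1+sg_2$ upon applying property (5) to the pairs $(s+t)g_1+(-t)g_1$ and $(-t)g_2+(s+t)g_2$, via $(s+t)+(-t)=s$; the subcase $s+t<0$ is handled the same way, both sides collapsing to $(-t)g_1+(-t)g_2$ via $(-(s+t))+s=-t$. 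The one thing requiring care throughout is that every scalar to which we apply the action — in particular $-t$, $s+t$, and $-(s+t)$ — is a bona fide nonnegative real, so that property (5) and the $\mathbb{R}_+$-action are legitimately available; granting that bookkeeping, the whole argument is purely formal.
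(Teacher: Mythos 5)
Your proposal is correct and follows essentially the same route as the paper's proof: a sign case analysis in which the mixed-sign case of $(s+t)x=sx+tx$ is settled by verifying the Grothendieck relation directly, using property (5) for nonnegative scalars. Your explicit check that the scalar action is well defined on equivalence classes (via property (4)) is a detail the paper leaves implicit, but it does not change the argument.
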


\begin{proof}
We have to prove the following properties
\begin{enumerate}
\item $0[(g_1,g_2)]=[(0,0)]$
\item $1[(g_1,g_2)]=[(g_1,g_2)]$
\item $(s+t)[(g_1,g_2)]=s[(g_1,g_2)]+t[(g_1,g_2)]$
\item $s(t[(g_1,g_2)])=(st)[(g_1,g_2)]$
\item $t([(g_1,g_2)]+[(h_1,h_2)])=t[(g_1,g_2)]+t[(h_1,h_2)]$
\end{enumerate}
The first two properties are trivial, as is the third one when $s,t\geq0$. Let us consider the other cases.
\begin{itemize}
\item If $s,t\leq0$, one has
\begin{align*}
(s+t)[(g_1,g_2)]\\
&=-[(-(s+t))g_1,(-(s+t))g_2)]\\
&=-[((-s-t)g_1,(-s-t)g_2)]\\
&=-[(-sg_1-tg_1,-sg_2-tg_2)]\\
&=-[(-sg_1,-sg_2)]+[(-tg_1,-tg_2)]\\
&=s[(g_1,g_2)]+t[(g_1,g_2)]
\end{align*}
\item If $s\geq 0, t\leq0$ and $s+t\geq0$, one has
$$
(s+t)[(g_1,g_2)]=[((s+t)g_1,(s+t)g_2)]
$$
and
$$
s[(g_1,g_2)]+t[(g_1,g_2)]=[(sg_1+(-t)g_2,sg_2+(-t)g_1)]
$$
and these two classes are indeed equal:
\begin{align*}
(s+t)g_1+sg_2+(-t)g_1\\
&=(s+t)g_1+(s+t)g_2+(-t)g_2+(-t)g_1\\
&=(s+t)g_2+(s+t)g_1+(-t)g_1+(-t)g_2\\
&=(s+t)g_2+sg_1+(-t)g_2
\end{align*}
\item The case $s\geq0, t\leq0, s+t\leq0$ is similar.
\item The remaining cases follow by symmetry.
\end{itemize}
The fourth property is also  trivial when $s,t\geq0$. Let us consider the other
cases
\begin{itemize}
\item If $s\geq0$ and $t<0$, then
\begin{align*}
s(t[(g_1,g_2)])\\
&=s[((-t)g_2,(-t)g_1)]\\
&=[((-st)g_2,(-st)g_1)]\\
&=(-(st))[(g_2,g_1)]\\
&=(st)[(g_1,g_2)]
\end{align*}
\item The case $s<0$ and $t\geq0$ is the same.
\item If $s,t\leq0$, one has
\begin{align*}
s(t[(g_1,g_2)])\\
&=s[(-t)g_2,(-t)g_1]\\
&=[((-s)(-t)g_1,(-s)(-t)g_2)]\\
&=(st)[(g_1,g_2)]
\end{align*}
\end{itemize}
The fifth property is trivial when $t\geq0$, so let us suppose $t<0$. One has
\begin{align*}
t([(g_1,g_2)]+[(h_1,h_2)])\\
&=t[(g_1+h_1,g_2+h_2)]\\
&=[((-t)(g_2+h_2),(-t)(g_1+h_1))]\\
&=[((-t)g_2+(-t)h_2,(-t)g_1+(-t)h_1)]\\
&=[(-t)g_2,(-t)g_1]+[(-t)h_2,(-t)h_1]\\
&=t[(g_1,g_2)]+t[(h_1,h_2)]
\end{align*}
\end{proof}

\end{document}